\newtheorem{remark}{Remark}[section] 
\newtheorem{example}{Example}[section] 
\newcommand{\Rmnum}[1]{\expandafter\@slowromancap\romannumeral #1@}
\title{A fourth-order maximum principle preserving operator splitting scheme for three-dimensional fractional Allen-Cahn equations \thanks{This research was supported by the Natural Science Foundation of China (Nos. 11402174, 41474103), the President's Fund-Research Start-up Fund from the Chinese University of Hong Kong, Shenzhen, the Excellent Youth Foundation of Hunan Province of China (No. 2018JJ1042) and the Innovation-Driven Project of Central South University (No. 2018CX042).}}
\author{Dongdong He\footnotemark[4] \and
Kejia Pan\footnotemark[2] \and Hongling Hu\footnotemark[3] }
\begin{document}
\maketitle

\renewcommand{\thefootnote}{\fnsymbol{footnote}}

\footnotetext[4]{School of Science and Engineering, The Chinese University of Hong Kong, Shenzhen, Guangdong 518172, China (hedongdong@cuhk.edu.cn)}
\footnotetext[2]{Corresponding author. School of Mathematics and Statistics, Central South University, Changsha, Hunan 410083, China (pankejia@hotmail.com)}
\footnotetext[3]{School of Mathematics and Statistics, Key Laboratory of High Performance Computing and Stochastic Information Processing, Hunan Normal
University, Changsha, Hunan 410081, China(honglinghu@hunnu.edu.cn)}

\renewcommand{\thefootnote}{\arabic{footnote}}

\begin{abstract}
In this paper, by using Strang's  second-order splitting method, the numerical procedure for the three-dimensional (3D)  space fractional  Allen-Cahn  equation can be divided into three steps. The first and third steps involve an ordinary differential equation, which can be solved analytically. The intermediate step involves a 3D linear fractional diffusion equation, which is solved by the Crank-Nicolson alternating directional implicit (ADI) method. The ADI technique can convert  the multidimensional problem into a series of one-dimensional problems, which greatly reduces the computational cost. A fourth-order difference scheme is adopted for discretization of the space fractional derivatives. Finally,  Richardson extrapolation is exploited to increase the temporal accuracy. The proposed method is shown to be unconditionally stable by Fourier analysis. Another contribution of this paper is to show that the numerical solutions satisfy the discrete maximum principle under reasonable time step constraint. For fabricated smooth solutions, numerical results show that the proposed method is unconditionally stable and fourth-order accurate in both time and space variables. In addition, the discrete  maximum principle is also numerically verified.
\end{abstract}

\begin{keywords}
fractional Allen-Cahn equation, operator splitting method, unconditional stability, ADI method, discrete
maximum principle
\end{keywords}

\begin{AMS}
65M06, 65M12
\end{AMS}

\pagestyle{myheadings}
\thispagestyle{plain}
\markboth{D.D. HE AND K.J. PAN}{An operator splitting scheme for 3D fractional Allen-Cahn equations}

\section{Introduction}
In this paper, we investigate the numerical solution of  the following space fractional Allen-Cahn equation~\cite{Allen,Chan2017,Gui2015,Nec2008}

\begin{align}\label{AC}
u_t=\varepsilon^2L_{\alpha}u-f(u), \quad \mathbf{x}\in \Omega, \quad  t \in (0,T],
\end{align}
with initial condition
\begin{align}\label{IC}
u(\mathbf{x},0)=u_0(x), \quad \mathbf{x}\in \Omega,
\end{align}
and the homogeneous Dirichlet boundary condition
\begin{align}\label{BC}
u(\mathbf{x},t)=0, \quad \mathbf{x} \ \rm{on}\ \partial\Omega, \quad  t \in [0,T],
\end{align}
where $\Omega$ is a rectangular domain, $\Omega=[0,1]^2$ in two dimension and  $\Omega=[0,1]^3$  in three dimension, $\alpha \in (1,2]$, and the nonlinear term $f(u)$ is taken as the polynomial double-well potential
\begin{align}\label{DW}
f(u)=u^3-u.
\end{align}
Here, the fractional Laplacian operator $L_{\alpha}$ replaces the standard Laplacian operator. In one dimension, the fractional Laplacian $L_{\alpha}u\ (\alpha \in (1,2))$ for $u$ defined in the interval $x\in [a,b]$ with homogeneous Dirichlet boundary condition is given as follows,
\begin{align}\label{1Dfr}
\mathcal{L}_{\alpha}u=\mathcal{L}^{\alpha}_xu:=\frac{1}{-2\cos(\frac{\alpha\pi}{2})}\left(_{a}D^{\alpha}_xu+_{x}D^{\alpha}_bu\right),
\end{align}
where the left and right Riemann-Liouville fractional derivatives are respectively defined as
$$_{a}D^{\alpha}_xu=\frac{1}{\Gamma(2-\alpha)}\frac{d^2}{dx^2}\int^x_a\frac{u(\xi)}{(x-\xi)^{\alpha-1}}d\xi,$$
$$_{x}D^{\alpha}_bu=\frac{1}{\Gamma(2-\alpha)}\frac{d^2}{dx^2}\int^b_x\frac{u(\xi)}{(\xi-x)^{\alpha-1}}d\xi.$$
The fractional operators in two dimension and three dimension  can be defined in a similar way, for example, the 3D fractional Laplacian $\mathcal{L}_{\alpha}u$ is defined as
$$\mathcal{L}_{\alpha}u=\mathcal{L}^{\alpha}_xu+\mathcal{L}^{\alpha}_yu+\mathcal{L}^{\alpha}_zu.$$
For the case $\alpha=2$, the above fractional  Allen-Cahn equation  will be reduced into the standard Allen-Cahn equation, which is widely studied in the literature.
When neglecting nonlinear term $f(u)$, equation (\ref{AC}) reduces to the fractional diffusion equation, which has been  numerically studied extensively in recent years ~\cite{Lin2017,Pan2014,Pang2012,Zhang2015}.

The fractional Allen-Cahn equation can be viewed as the $L^2$-gradient flow of the following fractional analogue Ginzburg-Landau free energy functional
\begin{align}
\mathcal{E}(u)=\int_{\Omega}F(u)-\frac{1}{2}\varepsilon^2uL_{\alpha}u du,
\end{align}
with $F(u)=\frac{1}{4}(u^2-1)^2$.


 The standard Allen-Cahn equation~\cite{Allen} was first introduced to describe the motion of anti-phase boundaries in crystalline solids. It can be considered as the $L^2$-gradient flow of the Ginzburg-Landau free energy. Recently, the Allen-Cahn equation, regarded as one of the diffusion-interface phase field  models, has been widely applied to many complicated moving interface problems, for example, vesicle membranes, the nucleation of solids and the mixture of two incompressible fluids~\cite{Du2004,Evans1991,Evans1992,Liu2003,Yang2006,Yue2005,Yue2006}. It is known that the Allen-Cahn equation has two intrinsic properties: one is the energy decreasing property, the other is the maximum principle~\cite{Evans1992}.  Since the Allen-Cahn equation is a nonlinear partial differential equation, the exact solution is not available. Numerical computations are essentially important to understand the behavior of the solution. In the existing literature, the Allen-Cahn equation was numerically extensively studied~\cite{Choi2009, Shen2010,Shen2016,Tang2016,LeeCMA2014,Leephya2015,YangX2003,YangX20131,YangX2009,YangX20132,Zhang2009}. For example, Choi {\it et al.}~\cite{Choi2009} proposed an unconditionally gradient stable nonlinear scheme with both discrete maximum principle and energy decreasing properties. And the discrete energy stability can be found in~\cite{Shen2010, YangX2003,YangX20131,YangX20132,Zhang2009}. More recently, the discrete maximum principle is discussed in~\cite{Hou2017,Shen2016,Tang2016}. However,
 the implicit-explicit scheme proposed in~\cite{Tang2016} is only first-order accurate in time variable, the fully discretized Crank-Nicolson scheme proposed in \cite{Hou2017} is a nonlinear scheme, and the authors pointed out that it is still remains open to see whether the maximum principle is still true for high-order accurate linear schemes, which is a difficult issue~\cite{Tang2016,Hou2017}.  Besides, the Allen-Cahn equation was also numerically investigated by using the operator splitting scheme~\cite{LeeCMA2014,Leephya2015,YangX2009}.

Recently, fractional differential equations have attracted many attention.  For time-fractional diffusion equations, finite difference methods and spectral methods have been used to investigate the solutions~\cite{Langlands,Li2009,Lin2007,Yuste2005}. For space fractional differential equations, there are also quite a lot of numerical studies for different equations~\cite{Yang2010,Wang20161,Wang20162,Liu2005,Tian2015,He,Bueno2014,Burrage2012,Zhuang2009,Hou2017,Song2016,Zhai2016,Saadatmandi2011,Moghaderi2017,Dehghan2017,Saadatmandi2011}.
 In particular, for the above space fractional  Allen-Cahn equation (\ref{AC}), Bueno-Orovio {\it et al.}~\cite{Bueno2014} used an implicit finite element method, Burrage {\it et al.}~\cite{Burrage2012} used a Fourier spectral method, and Hou {\it et al.}~\cite{Hou2017} used a finite difference method.   One of intrinsic property of the above space fractional  Allen-Cahn equation (\ref{AC}) is the maximum principle, which says that the value of the solution $u(\mathbf{x},t)$ is bounded by $1$ for any time $t>0$, provided the initial value $u_0(\mathbf{x})$ is bounded by $1$. Although the method of~\cite{Shen2016,Tang2016} can be extended to the above space fractional  Allen-Cahn equation (\ref{AC}) with the discrete maximum principle, this method has only a first-order accuracy in time variable. Recently, Hou {\it et al.}~\cite{Hou2017} proposed a finite difference scheme with discrete maximum principle, where the method is second-order accurate both in time and space variables. However, due to the nonlinear nature of the scheme and no dimensional splitting techniques,  the method of~\cite{Hou2017} will generally be computationally expensive especially when solving 3D problems.
And Song {\it et al.}~\cite{Song2016} first proposed a ADI method for the 2D space fractional Allen-Cahn equation and applied the proposed method to simulate the incompressible two-phase flows by coupling this fractional Allen-Cahn equation and Navier-Stokes equations, an extra linear term is added into the system in order to obtain the unconditional stability. Similar treatments by introducing an extra linear stabilized term for the integer order Allen-Cahn equation can be found in~\cite{YangX20131,Zhai2014,Shen2016,Tang2016}. As far as we aware, there is no study on high-order maximum principle preserving schemes for the space fractional Allen-Cahn equation, which is pointed as an open problem even in the case of the integer order Allen-Cahn equation~\cite{Tang2016}.

In this paper, we will develop a fourth-order maximum principle preserving operator splitting method for solving the 2D and 3D fractional Allen-Cahn equations (\ref{AC}).  First, by using a second-order operator splitting method, the numerical solution of the fractional Allen-Cahn equation can be obtained by three steps. The first and third steps involve   an ordinary differential equation (ODE), which can be solved analytically. The intermediate step involves a 2D/3D fractional diffusion equation, Crank-Nicolson scheme is adopted for time discretization, and the ADI method~\cite{Peaceman,HSun,He2017,Chen2018,He2018} combined with a fourth-order difference scheme is used for spatial discretization. The ADI technique converts the multidimensional diffusion problem into a series of one-dimensional problems, which greatly reduces the computational cost. Finally, Richardson extrapolation is exploited to increase the temporal accuracy to fourth order. The proposed method does not introduce any extra stabilized term and is shown to be unconditionally stable by the Von Neumann stability analysis for second step and a simple analysis for first and third steps. Another contribution of this paper is to show that the numerical solution satisfies the discrete maximum principle under reasonable time step constraint. Numerical experiments are carried out for both 2D and 3D space fractional Allen-Cahn equations.  For fabricated smooth solutions, results confirm  that the proposed method is unconditionally stable and fourth-order accurate for both time and space variables. Moreover,  the discrete  maximum principle is well verified numerically.

The rest of this paper is organized as follows. Section~\ref{sec2} provides the operator splitting method for the 3D fractional  Allen-Cahn equation. Section~\ref{sec3} proves unconditional stability of the proposed method. The discrete maximum principle is obtained in Section~\ref{sec4}. Section~\ref{sec5} presents the numerical results which confirm the theoretical results. And the conclusion is given in final section.

\section{Numerical method}\label{sec2}
In the following, we will present the numerical method to sovle the 3D fractional  Allen-Cahn equation, the proposed method can be straightforwardly applied to solve the 2D fractional  Allen-Cahn equation.

For a positive integer $N$, let $\Delta t= T/N$, $t_n=n\Delta t$. The time domain $[0,T]$ is covered by $\{t_n\}$. Let $v^n$ be the approximation of $v(x,y,z,n\Delta t)$ for an arbitary function $v(x,y,z,t)$.
The solution domain is defined as $\Omega\times [0,T]$ ($\Omega=[0,1]^3$), which is covered by a uniform grid
$\Omega_h=\{(x_i,y_j,z_k,t_n)|x_i=ih_x,y_j=jh_y,z_k=kh_z,t_n=n\Delta t, i=0,\cdots,M_x, j=0,\cdots, M_y, k=0,\cdots, M_z, n=0,\cdots, N\}$, where $h_x={1}/{M_x}, h_y={1}/{M_y}, h_z={1}/{M_z}$. Let $U^n=(U^n_{i,j,k})_{(M_x+1)\times(M_y+1)\times(M_z+1)}$ be the numerical solution at time level $t=t_n$, the homogeneous Dirichlet boundary condition (\ref{BC}) gives $$U^{n}_{0,j,k}=U^n_{M_x,j,k}=U^{n}_{i,0,k}=U^{n}_{i,M_y,k}=U^{n}_{i,j,0}=U^{n}_{i,j,M_z}=0$$ for any $n=0,\cdots,N$. And we denote
$$\|U^n\|_{\infty}=\max\limits_{\substack{1\leq i\leq M_x-1\\ 1\leq j\leq M_y-1 \\1\leq k \leq M_z-1}}|U^n_{i,j,k}|.$$

\subsection{Temporal  discretization}
Now we rewrite the fractional Allen-Cahn equation as the following evolution equation,
\begin{align}\label{Two}
\frac{\partial u}{\partial t}=\mathcal{L}_1u+\mathcal{L}_2u,
\end{align}
where the operators $\mathcal{L}_1, \mathcal{L}_2$ are defined as
$$\mathcal{L}_1u=-f(u)=u-u^3,\quad \mathcal{L}_2u=\varepsilon^2L_{\alpha}u.$$

According Strang's second-order splitting method~\cite{Strang}, the numerical solution of Eq. (\ref{Two}) in the time interval $[t_n,t_{n+1}]$ can be obtained as follows,
\begin{align}\label{Splitting}
U^{n+1}=\left(\mathcal{L}^{\frac{\Delta t}{2}}_1\circ \mathcal{L}^{\Delta t}_2\circ \mathcal{L}^{\frac{\Delta t}{2}}_1\right) U^n,
\end{align}
where $\mathcal{L}^{\Delta t}_1$ and $\mathcal{L}^{\Delta t}_2$ are the evolution operators for $\frac{\partial u}{\partial t}=\mathcal{L}_1u$ and $\frac{\partial u}{\partial t}=\mathcal{L}_2u$, respectively.

More precisely, we shall write the above splitting operator into three steps as follows,
\begin{align}
\frac{\partial \tilde{u}}{\partial t}&=-\frac{1}{2}f(\tilde{u})=\frac{1}{2}(\tilde{u}-\tilde{u}^3),\quad  \tilde{u}^n=U^n,\quad t\in [t_n, t_{n+1}], \label{stage1}\\
\frac{\partial \bar{u}}{\partial t}&=\varepsilon^2L_{\alpha}\bar{u}=\varepsilon^2\left(\mathcal{L}^{\alpha}_x\bar{u}+\mathcal{L}^{\alpha}_y\bar{u}+\mathcal{L}^{\alpha}_z\bar{u}\right),\quad  \bar{u}^n=\tilde{u}^{n+1},\quad t\in [t_n, t_{n+1}], \label{stage2}\\
\frac{\partial \hat{u}}{\partial t}&=-\frac{1}{2}f(\hat{u})=\frac{1}{2}(\hat{u}-\hat{u}^3),\quad  \hat{u}^n=\bar{u}^{n+1},\quad t\in [t_n, t_{n+1}]. \label{stage3}
\end{align}
The numerical solution at $t=t_{n+1}$ is given by $U^{n+1}=\hat{u}^{n+1}$.

The first and third steps (\ref{stage1}) and (\ref{stage3}) solve the same ODE, which can be calculated analytically~\cite{YangX2009,Leephya2015}, i.e.,
\begin{align}
\tilde{u}^{n+1}&=\frac{U^n}{\sqrt{(U^n)^2+\left(1-(U^n)^2\right)e^{-\Delta t}}},\label{analytical1}\\
\hat{u}^{n+1}&=\frac{\bar{u}^{n+1}}{\sqrt{(\bar{u}^{n+1})^2+\left(1-(\bar{u}^{n+1})^2\right)e^{-\Delta t}}}.\label{analytical2}
\end{align}

The intermediate step (\ref{stage2}) involves solving a 3D space fractional diffusion equation. A Crank-Nicolson ADI method is proposed for Eq. (\ref{stage2}), which will be described in the following.

We first apply the Crank-Nicolson scheme for temporal  discretization of Eq. (\ref{stage2}), i.e.,
\begin{align}\label{eq1}
\frac{\bar{u}^{n+1}-\bar{u}^n}{\Delta t}=\varepsilon^2\left(\mathcal{L}^{\alpha}_x+\mathcal{L}^{\alpha}_y+\mathcal{L}^{\alpha}_z\right)\frac{\bar{u}^{n+1}+\bar{u}^n}{2}+O(\Delta t^2),
\end{align}
Collecting the terms for $\bar{u}^{n+1}$ and $\bar{u}^{n}$ in (\ref{eq1}), one can get
\begin{align}\label{eq2}
\left(\frac{1}{\Delta t}-\frac{\varepsilon^2}{2}\left(\mathcal{L}^{\alpha}_x+\mathcal{L}^{\alpha}_y+\mathcal{L}^{\alpha}_z\right)\right)\bar{u}^{n+1}=\left(\frac{1}{\Delta t}+\frac{\varepsilon^2}{2}\left(\mathcal{L}^{\alpha}_x+\mathcal{L}^{\alpha}_y+\mathcal{L}^{\alpha}_z\right)\right)\bar{u}^{n}+O(\Delta t^2).
\end{align}
Eq. (\ref{eq2}) is equivalent to
\begin{align}\label{eq3}
&\frac{1}{\Delta t}\left(1-\frac{\Delta t\varepsilon^2}{2} \mathcal{L}^{\alpha}_x\right)\left(1-\frac{\Delta t\varepsilon^2}{2}\mathcal{L}^{\alpha}_y\right)\left(1-\frac{\Delta t\varepsilon^2}{2} \mathcal{L}^{\alpha}_z\right)\bar{u}^{n+1}\\
&=\frac{1}{\Delta t}\left(1+\frac{\Delta t\varepsilon^2}{2} \mathcal{L}^{\alpha}_x\right)\left(1+\frac{\Delta t\varepsilon^2}{2} \mathcal{L}^{\alpha}_y\right)\left(1+\frac{\Delta t\varepsilon^2}{2} \mathcal{L}^{\alpha}_z\right)\bar{u}^{n}\notag\\
&+\frac{\Delta t\varepsilon^4}{4}(\mathcal{L}^{\alpha}_x\mathcal{L}^{\alpha}_y+\mathcal{L}^{\alpha}_x\mathcal{L}^{\alpha}_z+\mathcal{L}^{\alpha}_y\mathcal{L}^{\alpha}_z)(\bar{u}^{n+1}-\bar{u}^{n})-\frac{\Delta t^2\varepsilon^6}{8}\mathcal{L}^{\alpha}_y\mathcal{L}^{\alpha}_x\mathcal{L}^{\alpha}_z(\bar{u}^{n+1}+\bar{u}^{n})+O(\Delta t^2).\notag
\end{align}
Since
\begin{align}
  &\frac{\Delta t\varepsilon^4}{4}(\mathcal{L}^{\alpha}_x\mathcal{L}^{\alpha}_y+\mathcal{L}^{\alpha}_x\mathcal{L}^{\alpha}_z+\mathcal{L}^{\alpha}_y\mathcal{L}^{\alpha}_z)(\bar{u}^{n+1}-\bar{u}^{n})\notag\\
  &=\frac{\Delta t\varepsilon^4}{4}(\mathcal{L}^{\alpha}_x\mathcal{L}^{\alpha}_y+\mathcal{L}^{\alpha}_x\mathcal{L}^{\alpha}_z+\mathcal{L}^{\alpha}_y\mathcal{L}^{\alpha}_z)\Big(\Delta t\frac{\partial \bar{u}^{n+\frac{1}{2}}}{\partial t}+O(\Delta t)^3\Big)=O(\Delta t^2), \notag
\end{align}
and
$$\frac{\Delta t^2\varepsilon^8}{8} \mathcal{L}^{\alpha}_x\mathcal{L}^{\alpha}_z\mathcal{L}^{\alpha}_z(u^{n+1}+u^{n})=\frac{\Delta t^2\varepsilon^8}{8} \mathcal{L}^{\alpha}_x\mathcal{L}^{\alpha}_y\mathcal{L}^{\alpha}_z\left(2\bar{u}^{n+\frac{1}{2}}+O(\Delta t^2)\right)=O(\Delta t^2),$$
Eq. (\ref{eq3}) is  indeed
\begin{align}\label{eq4}
&\frac{1}{\Delta t}\left(1-\frac{\Delta t\varepsilon^2}{2} \mathcal{L}^{\alpha}_x\right)\left(1-\frac{\Delta t\varepsilon^2}{2}\mathcal{L}^{\alpha}_y\right)\left(1-\frac{\Delta t\varepsilon^2}{2} \mathcal{L}^{\alpha}_z\right)\bar{u}^{n+1}\nonumber\\
=&\frac{1}{\Delta t}\left(1+\frac{\Delta t\varepsilon^2}{2} \mathcal{L}^{\alpha}_x\right)\left(1+\frac{\Delta t\varepsilon^2}{2} \mathcal{L}^{\alpha}_y\right)\left(1+\frac{\Delta t\varepsilon^2}{2} \mathcal{L}^{\alpha}_z\right)\bar{u}^{n}+O(\Delta t^2).
\end{align}
The above discretization is only for time.

\subsection{Spatial discretization}

For spatial discretization, with the homogenous boundary condition (\ref{BC}), the second-order difference scheme for the space fractional  derivatives is given as~\cite{Celik}
\begin{align}
\mathcal{L}^{\alpha}_xu\approx-\frac{1}{h^{\alpha}_1}\sum^{i-1}_{s=i-M_x+1}c^{\alpha}_{s}u_{i-s,j,k}=-\frac{1}{h^{\alpha}_1}\sum^{M_x-1}_{s=1}c^{\alpha}_{i-s}u_{s,j,k},\\
\mathcal{L}^{\alpha}_yu\approx-\frac{1}{h^{\alpha}_2}\sum^{j-1}_{s=j-M_y+1}c^{\alpha}_{s}u_{i,j-s,k}=-\frac{1}{h^{\alpha}_2}\sum^{M_y-1}_{s=1}c^{\alpha}_{j-s}u_{i,s,k},\\
\mathcal{L}^{\alpha}_zu\approx-\frac{1}{h^{\alpha}_3}\sum^{k-1}_{s=k-M_z+1}c^{\alpha}_{s}u_{i,j,k-s}=-\frac{1}{h^{\alpha}_3}\sum^{M_z-1}_{s=1}c^{\alpha}_{k-s}u_{i,j,s},
\end{align}
where
\begin{align}
c^{\alpha}_0&=\frac{\Gamma(\alpha+1)}{\left(\Gamma(\frac{\alpha}{2}+1)\right)^2}, \\
  c^{\alpha}_s&=\frac{(-1)^{s}\Gamma(\alpha+1)}{\Gamma(\frac{\alpha}{2}-s+1)\Gamma(\frac{\alpha}{2}+s+1)}=\Big(1-\frac{\alpha+1}{\frac{\alpha}{2}+s}\Big)c_{s-1}^\alpha, \quad \textrm{for } s\in \mathbb{Z}. \label{calpha}
\end{align}

Denote the operators $\Delta^{\alpha}_x, \Delta^{\alpha}_y, \Delta^{\alpha}_z$ and the identity operator $I$ as follows,
\begin{align}
[\Delta^{\alpha}_x{u}]_{i,j,k}&=-\sum^{i-1}_{s=i-M_x+1}c^{\alpha}_{s}{u}_{i-s,j,k}=-\sum^{M_x-1}_{s=1}c^{\alpha}_{i-s}{u}_{s,j,k},\label{sp1}\\
[\Delta^{\alpha}_y{u}]_{i,j,k}&=-\sum^{j-1}_{s=j-M_y+1}c^{\alpha}_{s}{u}_{i,j-s,k}=-\sum^{M_y-1}_{s=1}c^{\alpha}_{j-s}{u}_{i,s,k},\label{sp2}\\
[\Delta^{\alpha}_z{u}]_{i,j,k}&=-\sum^{k-1}_{s=k-M_z+1}c^{\alpha}_{s}{u}_{i,j,k-s}=-\sum^{M_z-1}_{s=1}c^{\alpha}_{k-s}{u}_{i,j,s},\label{sp3}
\end{align}
and
\begin{equation}
  [I{u}]_{i,j,k}={u}_{i,j,k}.\label{sp4}
\end{equation}

Let $\mathcal{A}^{\alpha}_{x}, \mathcal{A}^{\alpha}_{y}, \mathcal{A}^{\alpha}_{z}$ be the average operators defined as~\cite{Hao2016}
  \begin{align}
    \mathcal{A}^{\alpha}_{x} u_{i,j,k}&= \frac{\alpha}{24} u_{i-1,j,k} + \left(1-\frac{\alpha}{12}\right) u_{i,j,k} + \frac{\alpha}{24} u_{i+1,j,k},\\
    \mathcal{A}^{\alpha}_{y} u_{i,j,k}&= \frac{\alpha}{24} u_{i,j-1,k} + \left(1-\frac{\alpha}{12}\right) u_{i,j,k} + \frac{\alpha}{24} u_{i,j+1,k},\\
      \mathcal{A}^{\alpha}_{z} u_{i,j,k}&= \frac{\alpha}{24} u_{i,j,k-1} + \left(1-\frac{\alpha}{12}\right) u_{i,j,k} + \frac{\alpha}{24} u_{i,j,k+1}.
      \end{align}
The fourth-order  difference scheme for the space fractional  derivatives is given as~\cite{Hao2016,He}
\begin{align}
(\mathcal{L}^{\alpha}_xu)_{i,j,k}&=\frac{1}{h^{\alpha}_1}[(\mathcal{A}^{\alpha}_{x})^{-1}\Delta^{\alpha}_x{u}]_{i,j,k} +O(h^4_1),\label{eqa}\\
(\mathcal{L}^{\alpha}_yu)_{i,j,k}&=\frac{1}{h^{\alpha}_2}[(\mathcal{A}^{\alpha}_{y})^{-1}\Delta^{\alpha}_y{u}]_{i,j,k} +O(h^4_2),\\
(\mathcal{L}^{\alpha}_yu)_{i,j,k}&=\frac{1}{h^{\alpha}_3}[(\mathcal{A}^{\alpha}_{z})^{-1}\Delta^{\alpha}_z{u}]_{i,j,k}  +O(h^4_3).\label{eqb}
\end{align}

Substituting (\ref{eqa})-(\ref{eqb}) into (\ref{eq4}) and evaluating at ($x_i, y_j,z_k$), we have
\begin{align}\label{eqc}
&\frac{1}{\Delta t}\left[\Big(I-\frac{\Delta t\varepsilon^2}{2h^{\alpha}_1} (\mathcal{A}^{\alpha}_{x})^{-1}\Delta^{\alpha}_x\Big)\Big(I-\frac{\Delta t\varepsilon^2}{2h^{\alpha}_2} (\mathcal{A}^{\alpha}_{y})^{-1}\Delta^{\alpha}_y\Big)
\Big(I-\frac{\Delta t\varepsilon^2}{2h^{\alpha}_3} (\mathcal{A}^{\alpha}_{z})^{-1}\Delta^{\alpha}_z\Big)
\bar{u}^{n+1}\right]_{i,j,k}\\
&=\frac{1}{\Delta t}\left[\Big(I+\frac{\Delta t\varepsilon^2}{2h^{\alpha}_1} (\mathcal{A}^{\alpha}_{x})^{-1}\Delta^{\alpha}_x\Big)\Big(I+\frac{\Delta t\varepsilon^2}{2h^{\alpha}_2} (\mathcal{A}^{\alpha}_{y})^{-1}\Delta^{\alpha}_y\Big)
\Big(I+\frac{\Delta t\varepsilon^2}{2h^{\alpha}_3} (\mathcal{A}^{\alpha}_{z})^{-1}\Delta^{\alpha}_z\Big)\bar{u}^{n}\right]_{i,j,k}\notag\\
&+O(\Delta t^2+h_x^4+h_y^4+h_z^4).\notag
\end{align}

Neglecting the truncation errors in (\ref{eqc}), applying the operator $\mathcal{A}^{\alpha}_{x}\mathcal{A}^{\alpha}_{y}\mathcal{A}^{\alpha}_{z}$ to both sides and introducing the intermediate  variable $u^*, u^{**}$, we obtain the D'Yakonov ADI-like scheme~\cite{Peaceman} as follows,
\begin{align}
\big[(\mathcal{A}^{\alpha}_{x}- \beta_x\Delta^{\alpha}_x)\bar{u}^{*}\big]_{i,j,k}&=\big[(\mathcal{A}^{\alpha}_{x}+\beta_x\Delta^{\alpha}_x)(\mathcal{A}^{\alpha}_{y}+\beta_y \Delta^{\alpha}_y)(\mathcal{A}^{\alpha}_{z}+\beta_z\Delta^{\alpha}_z)\bar{u}^{n}\big]_{i,j,k}, \label{ADI1} \\
\big[(\mathcal{A}^{\alpha}_{y}-\beta_y \Delta^{\alpha}_y)\bar{u}^{**}\big]_{i,j,k}&=\bar{u}^{*}_{i,j,k}, \label{ADI2} \\
\big[(\mathcal{A}^{\alpha}_{z}-\beta_z\Delta^{\alpha}_z)\bar{u}^{n+1}\big]_{i,j,k}&=\bar{u}^{**}_{i,j,k}, \label{ADI3}
\end{align}
where $\beta_x={\Delta t\varepsilon^2}/(2h^{\alpha}_1), \beta_y={\Delta t\varepsilon^2}/(2h^{\alpha}_2),\beta_z={\Delta t\varepsilon^2}/(2h^{\alpha}_3)$. Each of the above three equations is a one-dimensional linear system and all coefficient matrices are constant matrices whose inverse only need to be computed once during the whole computation. Thus, the above ADI method can be solved very efficiently.

\begin{remark}
  From the analytical solution (\ref{analytical2}) and homogeneous boundary condition (\ref{BC}), one can show that $\bar{u}^{n+1}$ also satisfies the homogeneous boundary condition. Thus, from above ADI scheme, one can see that $\bar{u}^{**}$ satisfies the  homogeneous boundary condition in $x,y$ directions and $\bar{u}^{*}$ satisfies the  homogeneous boundary condition in $x$ direction. These conditions are needed in the implementation of the above ADI scheme.
\end{remark}

\begin{remark}
  The above Crank-Nicolson ADI scheme (\ref{ADI1})-(\ref{ADI3}) is second-order accurate in time and fourth-order accurate in space. Replacing the average operator $\mathcal{A}^{\alpha}_{x}$, $\mathcal{A}^{\alpha}_{y}$ and $\mathcal{A}^{\alpha}_{z}$ by the identity operator defined in (\ref{sp4}), yields the following second-order scheme:
  \begin{align}
\big[(\mathcal{I}- \beta_x\Delta^{\alpha}_x)\bar{u}^{*}\big]_{i,j,k}&=\big[(\mathcal{I}+\beta_x\Delta^{\alpha}_x)(\mathcal{I}+\beta_y \Delta^{\alpha}_y)(\mathcal{I}+\beta_z\Delta^{\alpha}_z)\bar{u}^{n}\big]_{i,j,k}, \label{ADI11} \\
\big[(\mathcal{I}-\beta_y \Delta^{\alpha}_y)\bar{u}^{**}\big]_{i,j,k}&=\bar{u}^{*}_{i,j,k}, \label{ADI21} \\
\big[(\mathcal{I}-\beta_z\Delta^{\alpha}_z)\bar{u}^{n+1}\big]_{i,j,k}&=\bar{u}^{**}_{i,j,k}. \label{ADI31}
\end{align}
\end{remark}

\subsection{Richardson extrapolation}
The scheme (\ref{eqc}) is a higher-order perturbation of the Crank-Nicolson scheme (\ref{eq1}) in three space variables. Thus, the temporal order of accuracy of the  ADI scheme (\ref{ADI1})-(\ref{ADI3}) is two, which is the same as the  Strang's time splitting method (\ref{stage1})-(\ref{stage3}).
Consequently, the proposed operator splitting method (\ref{stage1})-(\ref{stage3}) together with the ADI scheme (\ref{ADI1})-(\ref{ADI3}) will be second-order accurate in time variable and fourth-order accurate in space variable.  In order to increase the time accuracy, we apply the following Richardson extrapolation for the final step numerical solution:
\begin{equation}\label{Richardson}
  \widetilde{U}^N(\Delta t, h_x,h_y,h_z) = \frac{4}{3}U^N(\Delta t, h_x,h_y,h_z) - \frac{1}{3} U^{N/2}(2\Delta t, h_x,h_y,h_z),
\end{equation}
where $U^N(\Delta t, h_x,h_y,h_z)$, $U^{{N}/{2}}(2\Delta t, h_x,h_y,h_z)$ are numerical solutions at the final step by using spatial meshsizes $h_x,h_y,h_z$ and
 time step $\Delta t$, $2\Delta t$, respectively.

If the exact solution has sufficient regularity, then the extrapolated solution $\widetilde{U}^N$  is fourth-order accurate in both time and space,
see last two columns in Tables \ref{table11}-\ref{table25} for details.

\section{Unconditional stability}\label{sec3}

In this section, we show that the first and third steps in the splitting method (\ref{stage1})-(\ref{stage3}) are unconditionally stable, and then use von Neumann linear stability analysis to prove the unconditional stability of the  ADI scheme (\ref{ADI1})-(\ref{ADI3}) for the second step under the condition that the solution $u$ is periodic and smooth.

\begin{lemma}\label{Lemma1}~\cite{Celik}
The coefficients $c^{\alpha}_s$  have the following properties for $-1<\alpha\leq 2$
\begin{align}
c^{\alpha}_0&=\frac{\Gamma(\alpha+1)}{\left(\Gamma(\frac{\alpha}{2}+1)\right)^2}>0,\nonumber \\
c^{\alpha}_p&=c^{\alpha}_{-p}\leq 0,\quad \textrm{for}\ p=\pm1, \pm 2, \cdots,\nonumber \\
\sum^{i-1}_{p=-M_{x(y,z)}+1+i,p\neq0}&|c_p^{\alpha}|< c_0^\alpha, \quad \textrm{for}\ i=1,  \cdots, M_{x(y,z)}-1.
\end{align}
 \end{lemma}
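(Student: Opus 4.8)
The plan is to handle the three assertions separately, the first two being algebraic consequences of the closed form and the recurrence, and the third resting on a single global summation identity.

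First I would dispose of the positivity $c_0^\alpha>0$ and the symmetry $c_p^\alpha=c_{-p}^\alpha$. Positivity is immediate: for $-1<\alpha\le 2$ both $\alpha+1>0$ and $\tfrac{\alpha}{2}+1>0$, so $\Gamma(\alpha+1)>0$ and $\Gamma(\tfrac{\alpha}{2}+1)>0$, and $c_0^\alpha$ is their ratio. Symmetry follows by inspecting the closed form: sending $p\mapsto -p$ leaves $(-1)^p$ unchanged and merely interchanges the denominator factors $\Gamma(\tfrac{\alpha}{2}-p+1)$ and $\Gamma(\tfrac{\alpha}{2}+p+1)$. For the sign $c_p^\alpha\le 0$ ($p\ne0$) I would induct on $p\ge1$ using the recurrence (\ref{calpha}), after rewriting its multiplier as $1-\frac{\alpha+1}{\alpha/2+s}=\frac{s-1-\alpha/2}{s+\alpha/2}$. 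At $s=1$ this equals $\frac{-\alpha/2}{1+\alpha/2}\le 0$, so $c_1^\alpha\le 0$; for every $s\ge 2$ the numerator $s-1-\tfrac{\alpha}{2}\ge 0$ (since $\tfrac{\alpha}{2}\le 1$) and the denominator is positive, so the multiplier is nonnegative and the nonpositive sign of $c_s^\alpha$ is preserved as we march upward. The negative indices are then covered by the symmetry just proved.

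The heart of the matter is the third (row-sum) inequality, and the key step I would isolate is the global identity
\begin{equation*}
\sum_{p=-\infty}^{\infty} c_p^\alpha = 0 .
\end{equation*}
I would obtain this from the generating function $\sum_p c_p^\alpha e^{ip\theta}=(2-2\cos\theta)^{\alpha/2}$: expanding $(1-e^{i\theta})^{\alpha/2}$ and $(1-e^{-i\theta})^{\alpha/2}$ as binomial series and collecting the coefficient of $e^{ip\theta}$ reproduces, via a Vandermonde-type convolution identity, exactly the Gamma-function expression for $c_p^\alpha$, while the product factors as $\bigl((1-e^{i\theta})(1-e^{-i\theta})\bigr)^{\alpha/2}=(2-2\cos\theta)^{\alpha/2}$. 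Evaluating at $\theta=0$ then gives $\sum_p c_p^\alpha=0$.

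Finally I would combine the pieces. Since $c_p^\alpha\le 0$ for $p\ne0$, the identity yields $\sum_{p\ne0}|c_p^\alpha|=-\sum_{p\ne0}c_p^\alpha=c_0^\alpha$. The window $-M_{x(y,z)}+1+i\le p\le i-1$ appearing in the statement is finite (it contains $M_{x(y,z)}-1$ consecutive integers) and hence omits infinitely many indices; for $\alpha\in(1,2)$ each omitted $c_p^\alpha$ with $p\ne0$ is \emph{strictly} negative, so the truncated absolute row-sum is strictly below $c_0^\alpha$, which is the claim. The main obstacle is the identity $\sum_p c_p^\alpha=0$: one must justify the term-by-term manipulation of the binomial series (absolutely convergent for $\alpha>0$) and the reduction of the resulting convolution to the Gamma-function coefficients. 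A secondary point worth flagging is strictness at the endpoint $\alpha=2$, where $c_p^\alpha=0$ for $|p|\ge2$ and the strict inequality degenerates to equality once the window contains both $p=\pm1$; the strict bound is genuinely an $\alpha\in(1,2)$ statement.
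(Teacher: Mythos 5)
Your argument is correct on the range the paper actually uses, and it is essentially the standard proof from the cited reference \cite{Celik} (the paper itself offers no proof, only the citation): the coefficients $c_p^\alpha$ are the Fourier coefficients of the fractional centered difference symbol $(2-2\cos\theta)^{\alpha/2}$, the identity $\sum_{p\in\mathbb{Z}}c_p^\alpha=0$ follows by evaluating at $\theta=0$, and the row-sum bound is then the truncation of $\sum_{p\neq 0}|c_p^\alpha|=c_0^\alpha$. Your Vandermonde computation of the coefficient of $e^{ip\theta}$ and the absolute convergence of the binomial series for $\alpha>0$ are exactly the points that need to be checked, and they go through. Two remarks, both about the statement rather than your proof. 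First, your sign argument at $s=1$ uses $-\alpha/2\le 0$, i.e.\ $\alpha\ge 0$; for $\alpha\in(-1,0)$ one in fact has $c_1^\alpha>0$ (the factor $\Gamma(\alpha/2)$ in the denominator is negative there), so the lemma as transcribed with ``$-1<\alpha\le 2$'' overstates its range --- this is harmless here since the paper only invokes it for $\alpha\in(1,2]$. Second, your observation that the strict inequality degenerates to equality at $\alpha=2$ (where $c^2_{\pm 1}=-1$, $c^2_p=0$ for $|p|\ge 2$, so interior rows satisfy $\sum_{p\neq 0}|c_p^\alpha|=c_0^\alpha$ exactly) is a genuine defect of the stated lemma that the paper does not acknowledge; for $\alpha\in(1,2)$ your strictness argument (infinitely many omitted, strictly negative terms) is sound.
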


  \begin{lemma}\label{Lemma2}
Suppose that $z$ is a complex number and $a>0$ is a real number, then
\begin{align}
\left|\frac{a-z}{a+z}\right|\leq 1 \quad {\rm if\ and\ only\ if}\ {\rm Re(z)}\geq 0.
\end{align}
\end{lemma}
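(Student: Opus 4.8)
The plan is to reduce the modulus inequality to a comparison of squared moduli and then expand in real and imaginary parts. First I would write $z = x + \mathrm{i} y$ with $x = \mathrm{Re}(z)$ and $y = \mathrm{Im}(z)$ real. Since $a>0$, the only point needing care is the well-definedness of the quotient: whenever $\mathrm{Re}(z)\ge 0$ we have $\mathrm{Re}(a+z)=a+x\ge a>0$, so $a+z\neq 0$ and the left-hand side makes sense; and if the left-hand side is assumed finite at the outset, then $a+z\neq 0$ automatically. Thus in both directions of the equivalence the denominator is nonzero and I may freely multiply through by $|a+z|$.

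Next, because $a+z\neq 0$, the inequality $\left|\frac{a-z}{a+z}\right|\le 1$ is equivalent to $|a-z|\le |a+z|$, and since both sides are nonnegative, to $|a-z|^2\le |a+z|^2$. I would then compute the two squared moduli directly from $a-z=(a-x)-\mathrm{i}y$ and $a+z=(a+x)+\mathrm{i}y$, namely $|a-z|^2=(a-x)^2+y^2$ and $|a+z|^2=(a+x)^2+y^2$. Subtracting, the $a^2$, $x^2$ and $y^2$ terms cancel, leaving $|a+z|^2-|a-z|^2=4ax$.

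Hence $|a-z|^2\le |a+z|^2$ holds if and only if $4ax\ge 0$, and since $a>0$ this is exactly $x=\mathrm{Re}(z)\ge 0$. Because every step in the chain is an equivalence, this simultaneously proves both implications of the ``if and only if'' statement. I do not expect any genuine obstacle here: the argument is a one-line expansion after squaring, and the sole subtlety, the vanishing of the denominator, is dispatched by the observation that $\mathrm{Re}(z)\ge 0$ together with $a>0$ forces $a+z\neq 0$.
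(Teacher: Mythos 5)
Your proof is correct: squaring the moduli and computing $|a+z|^2-|a-z|^2=4a\,\mathrm{Re}(z)$ is exactly the standard verification, and your handling of the denominator (noting $\mathrm{Re}(a+z)\geq a>0$ when $\mathrm{Re}(z)\geq 0$) is sound. The paper itself omits the argument entirely, stating only that the lemma is ``easy to verify,'' so your write-up simply supplies the expected computation.
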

\begin{proof}
  This lemma is easy to verify.
\end{proof}

\begin{lemma}\label{theorem1}
At any time level $t=t_n$,  for any initial value $U^n_{i,j,k}(i=0,\cdots,M_x, j=0,\cdots,M_y,k=0,\cdots,M_z)$, the numerical solution $u^n_{i,j,k}$ given by (\ref{analytical1}) for the first step in the operator splitting method is unconditionally stable.
\end{lemma}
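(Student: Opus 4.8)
The plan is to reduce the assertion to an elementary pointwise estimate on the explicit map (\ref{analytical1}) and to show that this map transports bounds with a constant that is independent of $\Delta t$. First I would fix a grid point and abbreviate $U=U^n_{i,j,k}$ and $\tilde u=\tilde u^{n+1}_{i,j,k}$, so that (\ref{analytical1}) reads $\tilde u = U/\sqrt{D}$ with $D = U^2+(1-U^2)e^{-\Delta t}$. The first task is well-definedness: rewriting $D = e^{-\Delta t}+U^2\bigl(1-e^{-\Delta t}\bigr)$ and using $0<e^{-\Delta t}<1$ for every $\Delta t>0$ shows $D>0$ for all real $U$, so the square root is real and strictly positive and $\tilde u$ is unambiguously defined.

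The core of the argument is a single algebraic identity coming from $\tilde u^2 = U^2/D$. Computing $U^2 - D = U^2e^{-\Delta t}-e^{-\Delta t} = e^{-\Delta t}(U^2-1)$ shows that the sign of $U^2-D$ equals the sign of $U^2-1$. Consequently $\tilde u^2\le 1 \iff U^2\le D \iff U^2\le 1$, which yields the maximum-principle-type implication $|U|\le 1 \Rightarrow |\tilde u|\le 1$. In the complementary regime $|U|>1$ the same identity gives both $U^2>D$ and $D>1$, whence $1<\tilde u^2 = U^2/D < U^2$, i.e. $1<|\tilde u|<|U|$. Combining the two regimes produces the clean pointwise bound $|\tilde u^{n+1}_{i,j,k}|\le \max\{|U^n_{i,j,k}|,\,1\}$.

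Taking the maximum over all interior indices then gives $\|\tilde u^{n+1}\|_{\infty}\le \max\{\|U^n\|_{\infty},\,1\}$. Since neither the verification $D>0$ nor the factorization $U^2-D = e^{-\Delta t}(U^2-1)$ imposes any restriction on $\Delta t$, this bound holds for arbitrary step size, which is precisely the claimed unconditional stability of the first splitting substep; the identical computation applies verbatim to (\ref{analytical2}) for the third substep.

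I do not expect a genuine obstacle, as everything collapses to the factorization $U^2-D=e^{-\Delta t}(U^2-1)$. The only point that requires care is selecting the correct notion of stability. Because the map is \emph{expansive} on $(0,1)$ — it pushes values toward the stable equilibria $\pm1$, so $U<\tilde u<1$ there — one cannot hope for a plain non-expansive estimate of the form $\|\tilde u^{n+1}\|_{\infty}\le\|U^n\|_{\infty}$. The appropriate statement, and what ``unconditional stability'' should mean for this nonlinear ODE substep, is the $\Delta t$-independent bound $\|\tilde u^{n+1}\|_{\infty}\le\max\{\|U^n\|_{\infty},\,1\}$ established above.
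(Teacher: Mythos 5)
Your proposal is correct and follows essentially the same route as the paper: a pointwise two-case analysis ($|U^n_{i,j,k}|\le 1$ versus $|U^n_{i,j,k}|>1$) bounding the denominator $D=U^2+(1-U^2)e^{-\Delta t}$ from below by $U^2$ in the first case and by $1$ in the second, yielding the same conclusion $\|\tilde u^{n+1}\|_{\infty}\le\max\{\|U^n\|_{\infty},1\}$. The factorization $U^2-D=e^{-\Delta t}(U^2-1)$ is a tidy way to organize the case split, but it is the identical argument.
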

\begin{proof}
  For a given ($i,j,k$) ($i=0,\cdots,M_x, j=0,\cdots,M_y,k=0,\cdots,M_z$)\\
Case 1. if the component $U^n_{i,j,k}$ satisfies $|U^n_{i,j,k}|\leq 1$, then by using (\ref{analytical1}), one has $$|\tilde{u}^{n+1}_{i,j,k}|=\frac{|U^n_{i,j,k}|}{\sqrt{(U^n_{i,j,k})^2+\left(1-(U^n_{i,j,k})^2\right)e^{-\Delta t}}}\leq \frac{|U^n_{i,j,k}|}{\sqrt{(U^n_{i,j,k})^2}} = 1.$$
Case 2. if the component $U^n_{i,j,k}$ satisfies $|U^n_{i,j,k}|>1$, then by using (\ref{analytical1}), one has $$|\tilde{u}^{n+1}_{i,j,k}|=\frac{|U^n_{i,j,k}|}{\sqrt{(U^n_{i,j,k})^2(1-e^{-\Delta t})+e^{-\Delta t}}}\leq \frac{|U^n_{i,j,k}|}{\sqrt{(1-e^{-\Delta t})+e^{-\Delta t}}} = |U^n_{i,j,k}|.$$
Combining these two cases, one has
$$\|\tilde{u}^{n+1}\|_{\infty}\leq \max\left\{\|U^{n}\|_{\infty},1\right\},$$
which completes the proof of the lemma.
\end{proof}

\begin{lemma}\label{theorem2}
At any time level $t=t_n$,  for any initial value $\hat{u}^n_{i,j,k}(i=0,\cdots,M_x, j=0,\cdots,M_y,k=0,\cdots,M_z)$, the numerical solution $\hat{u}^{n+1}_{i,j,k}$ given by (\ref{analytical2}) for the third step in the operator splitting method is unconditionally stable.
\end{lemma}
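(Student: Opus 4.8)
The plan is to mirror the argument of Lemma~\ref{theorem1} almost verbatim, since the analytical update (\ref{analytical2}) for the third step has exactly the same algebraic form as (\ref{analytical1}) for the first step, with $\bar{u}^{n+1}$ now playing the role that $U^n$ plays there and $\hat{u}^{n+1}$ playing the role of $\tilde{u}^{n+1}$. The first thing I would check is that the update is well defined, i.e. that the denominator $\sqrt{(\bar{u}^{n+1})^2 + (1-(\bar{u}^{n+1})^2)e^{-\Delta t}}$ never vanishes. Rewriting the radicand as $(\bar{u}^{n+1})^2(1-e^{-\Delta t}) + e^{-\Delta t}$, one sees that for $\Delta t>0$ both $1-e^{-\Delta t}\geq 0$ and $e^{-\Delta t}>0$, so the radicand is strictly positive and the square root is real and nonzero for every grid value; this guarantees the scheme produces a well-defined $\hat{u}^{n+1}_{i,j,k}$.

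Next I would split into the same two cases as in Lemma~\ref{theorem1}, argued componentwise for each fixed $(i,j,k)$. In Case~1, when $|\bar{u}^{n+1}_{i,j,k}|\leq 1$, the coefficient $1-(\bar{u}^{n+1}_{i,j,k})^2$ is nonnegative, so discarding that nonnegative summand from the radicand only decreases the denominator; bounding it below by $\sqrt{(\bar{u}^{n+1}_{i,j,k})^2}$ then yields $|\hat{u}^{n+1}_{i,j,k}|\leq 1$. In Case~2, when $|\bar{u}^{n+1}_{i,j,k}|>1$, I use the form $(\bar{u}^{n+1}_{i,j,k})^2(1-e^{-\Delta t})+e^{-\Delta t}$ of the radicand and bound it below by $(1-e^{-\Delta t})+e^{-\Delta t}=1$, which gives $|\hat{u}^{n+1}_{i,j,k}|\leq |\bar{u}^{n+1}_{i,j,k}|$. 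Combining the two cases over all indices produces the uniform bound $\|\hat{u}^{n+1}\|_{\infty}\leq \max\{\|\bar{u}^{n+1}\|_{\infty},1\}$, valid for every $\Delta t>0$, which is precisely the asserted unconditional stability of the third step.

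There is essentially no serious obstacle here, since the computation is entirely parallel to the already-verified Lemma~\ref{theorem1}. The only point requiring a moment's care is the sign bookkeeping in each case: one must discard or replace the term in the radicand so that the denominator is bounded \emph{below} rather than above, which is what makes the quotient bound go in the correct direction. I therefore expect the proof to be a short two-case estimate with no new ideas, and in fact one could simply remark that (\ref{analytical2}) is obtained from (\ref{analytical1}) by the substitution $U^n\mapsto\bar{u}^{n+1}$ and invoke Lemma~\ref{theorem1} directly.
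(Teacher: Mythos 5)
Your proposal is correct and matches the paper exactly: the paper's proof of this lemma is simply the observation that the argument of Lemma~\ref{theorem1} carries over verbatim with $\bar{u}^{n+1}$ in place of $U^n$, which is precisely the two-case estimate you reproduce. The extra check that the denominator never vanishes is a harmless addition not present in the paper.
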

\begin{proof}
  The proof of this lemma is similar as Lemma \ref{theorem1}.
\end{proof}

\begin{lemma}\label{theorem3}
 If $u$ is periodic in $x$, $y$ and $z$ directions and smooth, then the Crank-Nicolson ADI method (\ref{ADI1})-(\ref{ADI3}) is unconditionally stable.
\end{lemma}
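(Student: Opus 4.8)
The plan is to use the von Neumann (Fourier) linear stability analysis, exactly as signalled by the two preceding lemmas. First I would eliminate the intermediate variables $\bar{u}^{*}$ and $\bar{u}^{**}$ from the D'Yakonov scheme (\ref{ADI1})--(\ref{ADI3}). Because the one-dimensional operators in different coordinate directions act on disjoint indices and therefore commute, applying $(\mathcal{A}^{\alpha}_{y}-\beta_y\Delta^{\alpha}_y)$ and $(\mathcal{A}^{\alpha}_{z}-\beta_z\Delta^{\alpha}_z)$ to the successive equations collapses the three-step update into the single factored identity
\begin{align*}
&(\mathcal{A}^{\alpha}_{x}-\beta_x\Delta^{\alpha}_x)(\mathcal{A}^{\alpha}_{y}-\beta_y\Delta^{\alpha}_y)(\mathcal{A}^{\alpha}_{z}-\beta_z\Delta^{\alpha}_z)\bar{u}^{n+1}\\
&\qquad=(\mathcal{A}^{\alpha}_{x}+\beta_x\Delta^{\alpha}_x)(\mathcal{A}^{\alpha}_{y}+\beta_y\Delta^{\alpha}_y)(\mathcal{A}^{\alpha}_{z}+\beta_z\Delta^{\alpha}_z)\bar{u}^{n}.
\end{align*}
Under the periodicity assumption I would then insert the Fourier mode $\bar{u}^{n}_{i,j,k}=\xi^{n}e^{\mathbf{i}(\sigma_x i h_x+\sigma_y j h_y+\sigma_z k h_z)}$ and read off the amplification factor $G=\xi^{n+1}/\xi^{n}$.

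Second, I would compute the Fourier symbols of the building-block operators. Writing $\theta_d=\sigma_d h_d$, the symmetry $c^{\alpha}_p=c^{\alpha}_{-p}$ from Lemma~\ref{Lemma1} makes the symbol of $\Delta^{\alpha}_d$ real, namely $\widehat{\Delta}_d=-c^{\alpha}_0-2\sum_{s\ge1}c^{\alpha}_s\cos(s\theta_d)$; the sign facts $c^{\alpha}_0>0$, $c^{\alpha}_s\le0$ $(s\ne0)$ together with the summation bound in Lemma~\ref{Lemma1} give $\widehat{\Delta}_d\le0$ for every $\theta_d$. A direct computation gives the symbol of the average operator, $\widehat{A}_d=1-\tfrac{\alpha}{6}\sin^2(\theta_d/2)$, which for $\alpha\in(1,2]$ stays in $[1-\tfrac{\alpha}{6},1]\subset(0,1]$ and is hence strictly positive.

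Finally, since the scheme is a tensor product, the amplification factor factors directionwise,
\begin{align*}
G=\prod_{d\in\{x,y,z\}}\frac{\widehat{A}_d+\beta_d\widehat{\Delta}_d}{\widehat{A}_d-\beta_d\widehat{\Delta}_d}.
\end{align*}
For each factor I would set $a=\widehat{A}_d>0$ and $z=-\beta_d\widehat{\Delta}_d$; because $\beta_d>0$ and $\widehat{\Delta}_d\le0$ one has $z\ge0$, so Lemma~\ref{Lemma2} yields $\left|(a-z)/(a+z)\right|\le1$. Multiplying the three factors gives $|G|\le1$ for all phase angles and, crucially, for every $\Delta t>0$ (the step size enters only through the positive quantities $\beta_d$), which is precisely unconditional stability. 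The main obstacle is not the final estimate but the second step: one must confirm that the discrete symbol of $\Delta^{\alpha}_d$ is both real and non-positive, and this hinges entirely on the symmetry and sign structure of $\{c^{\alpha}_s\}$ recorded in Lemma~\ref{Lemma1}, while the positivity of $\widehat{A}_d$ likewise exploits $\alpha\le2$; I would therefore double-check that these hypotheses are fully used.
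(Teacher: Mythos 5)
Your proposal is correct and follows essentially the same route as the paper: a von Neumann analysis of the collapsed factored scheme, with each of the three directional amplification factors bounded by $1$ via the sign structure of $\{c^{\alpha}_s\}$ from Lemma~\ref{Lemma1}, the positivity of the average operator's symbol for $1<\alpha\le 2$, and Lemma~\ref{Lemma2}. The only cosmetic difference is that you take the symbol of $\Delta^{\alpha}_d$ to be real (a symmetric convolution sum), whereas the paper keeps the index-dependent asymmetric sums $\sum_{s=i-M_x+1}^{i-1}c^{\alpha}_s e^{-Isw_x}$ and invokes Lemma~\ref{Lemma2} with only $\mathrm{Re}(z)\ge 0$, which covers the non-real case as well.
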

\begin{proof}
   Let $\bar{u}^n_{i,j,k}$ be the numerical solution  of the Crank-Nicolson ADI method (\ref{ADI1})-(\ref{ADI3}). Since $u$ is periodic and smooth while the first step (\ref{stage1}) is an ODE, which is solved analytically.  Thus, we can assume that the numerical solution $\bar{u}^n_{i,j,k}$ at time level $t=t_n$ is also periodic, which  has the following form
\begin{align}\label{denotes}
&\bar{u}^{n}_{i,j,k}=\xi^n e^{I(w_xi+w_yj+w_zk)},
\end{align}
where $\xi^n$ is the amplitude at time level $n$, $I=\sqrt{-1}$ is the complex unit, and $w_x$  $w_y$ and $w_z$ are phase angles in $x$, $y$ and $z$ directions, respectively.

Substituting (\ref{denotes}) into (\ref{ADI1})-(\ref{ADI3}), one can get
\begin{align}\label{eigen}
\left|\frac{\bar{u}^{n+1}_{i,j,k}}{\bar{u}^{n}_{i,j,k}}\right|&=\left|\frac{1+\frac{\alpha(\cos w_x-1)}{12}-\beta_x\sum\limits^{i-1}_{s=i-M_x+1}c^{\alpha}_{s}e^{-Isw_x}}{1+\frac{\alpha(\cos w_x-1)}{12}+\beta_x\sum\limits^{i-1}_{s=i-M_x+1}c^{\alpha}_{s}e^{-Isw_x}}\right|\cdot\left|\frac{1+\frac{\alpha(\cos w_y-1)}{12}-\beta_y\sum\limits^{j-1}_{s=j-M_y+1}c^{\alpha}_{s}e^{-Isw_y}}{1+\frac{\alpha(\cos w_y-1)}{12}+\beta_y\sum\limits^{j-1}_{s=j-M_y+1}c^{\alpha}_{s}e^{-Isw_y}}\right|\\
&\cdot\left|\frac{1+\frac{\alpha(\cos w_z-1)}{12}-\beta_z\sum\limits^{k-1}_{s=k-M_z+1}c^{\alpha}_{s}e^{-Isw_z}}{1+\frac{\alpha(\cos w_z-1)}{12}+\beta_z\sum\limits^{k-1}_{s=k-M_z+1}c^{\alpha}_{s}e^{-Isw_z}}\right|.\notag
\end{align}

From Lemma~\ref{Lemma1}, we know that
$$\textrm{Re}\left(\sum\limits^{i-1}_{s=i-M_x+1}c^{\alpha}_{s}e^{-Isw_x}\right)=c^{\alpha}_0+\sum\limits^{i-1}_{s=i-M_x+1,s\neq0}c^{\alpha}_s\cos(sw_x)\geq c^{\alpha}_0-\sum\limits^{i-1}_{s=i-M_x+1,s\neq0}|c^{\alpha}_s|\geq0.$$

Since $1<\alpha\leq2$,
$$1+\frac{\alpha(\cos w_x-1)}{12}=\frac{12+\alpha(\cos w_x-1)}{12}\geq\frac{12-4}{12}>0.$$
By using Lemma~\ref{Lemma2}, one has
\begin{align}
\left|\frac{1+\frac{\alpha(\cos w_x-1)}{12}-\beta_x\sum\limits^{i-1}_{s=i-M_x+1}c^{\alpha}_{s}e^{-Isw_x}}{1+\frac{\alpha(\cos w_x-1)}{12}+\beta_x\sum\limits^{i-1}_{s=i-M_x+1}c^{\alpha}_{s}e^{-Isw_x}}\right|\leq 1.
\end{align}
Similarly, one has
\begin{align}
\left|\frac{1+\frac{\alpha(\cos w_y-1)}{12}-\beta_y\sum\limits^{j-1}_{s=j-M_y+1}c^{\alpha}_{s}e^{-Isw_y}}{1+\frac{\alpha(\cos w_y-1)}{12}+\beta_y\sum\limits^{j-1}_{s=j-M_y+1}c^{\alpha}_{s}e^{-Isw_y}}\right|\leq 1,\end{align}
 and
 \begin{align}
 \left|\frac{1+\frac{\alpha(\cos w_z-1)}{12}-\beta_z\sum\limits^{k-1}_{s=k-M_z+1}c^{\alpha}_{s}e^{-Isw_z}}{1+\frac{\alpha(\cos w_z-1)}{12}+\beta_z\sum\limits^{k-1}_{s=k-M_z+1}c^{\alpha}_{s}e^{-Isw_z}}\right|\leq 1.
\end{align}

Thus,
\begin{align}
\left|\frac{\bar{u}^{n+1}_{i,j,k}}{\bar{u}^{n}_{i,j,k}}\right|\leq 1.
\end{align}
This completes the proof of the lemma.
 \end{proof}

\begin{theorem}
 If $u$ is periodic and smooth, then the operator splitting scheme (\ref{stage1}), (\ref{ADI1})-(\ref{ADI3}) and (\ref{stage3})
 is unconditionally stable.
\end{theorem}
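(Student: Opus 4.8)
The plan is to obtain the stability of the full scheme as a direct consequence of the three preceding lemmas, by viewing one complete time step $U^n \mapsto U^{n+1}$ as the composition of the three sub-step maps appearing in the Strang splitting (\ref{Splitting}): the first ODE half-step, the intermediate Crank--Nicolson ADI diffusion solve, and the second ODE half-step. Since unconditional stability is precisely the statement that no choice of $\Delta t$ produces amplification, and since each of the three factors has already been shown to be non-amplifying, the composite map inherits the property. No genuinely new estimate is needed beyond what the lemmas supply; the work is to chain them correctly.

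Concretely, I would proceed in the natural order of the splitting. First, Lemma~\ref{theorem1} applied to (\ref{analytical1}) gives $\|\tilde{u}^{n+1}\|_\infty \leq \max\{\|U^n\|_\infty, 1\}$ for the first half-step. Next, invoking the hypothesis that $u$ is periodic and smooth, Lemma~\ref{theorem3} shows that the intermediate ADI solve (\ref{ADI1})--(\ref{ADI3}) does not amplify any Fourier mode, i.e. $|\bar{u}^{n+1}_{i,j,k}/\bar{u}^{n}_{i,j,k}| \leq 1$ for the representation (\ref{denotes}), so the amplitude of $\bar{u}^{n+1}$ is controlled by that of $\bar{u}^n = \tilde{u}^{n+1}$. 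Finally, Lemma~\ref{theorem2} applied to (\ref{analytical2}) gives $\|\hat{u}^{n+1}\|_\infty \leq \max\{\|\bar{u}^{n+1}\|_\infty, 1\}$ for the third half-step, and $U^{n+1} = \hat{u}^{n+1}$. Chaining these three bounds yields an estimate of the form $\|U^{n+1}\| \leq \max\{\|U^n\|, 1\}$ that holds for arbitrary $\Delta t$, which is exactly unconditional stability.

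The one delicate point I would flag, and which I expect to be the main obstacle, is the mismatch of norms among the sub-steps: Lemmas~\ref{theorem1} and \ref{theorem2} control the two pointwise ODE half-steps in the discrete $\infty$-norm, whereas the ADI estimate of Lemma~\ref{theorem3} is a von Neumann (Fourier/$\ell^2$) argument about per-mode contraction. The periodicity and smoothness hypothesis is exactly what licenses the Fourier ansatz used in Lemma~\ref{theorem3} and guarantees that the diffusion solve does not enlarge the amplitude fed into the third step, while the two ODE half-steps act pointwise and obey the uniform $\max\{\cdot,1\}$ bound independently of the spatial coupling. The remaining bookkeeping is to check that the output periodicity/boundary conditions of each step match the input assumptions of the next (as already noted in the Remark following the ADI scheme), so that the three lemmas may be composed legitimately; making the norm reconciliation fully rigorous, rather than qualitative, is the only part requiring care.
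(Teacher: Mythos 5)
Your proposal matches the paper's proof exactly: the paper simply states that combining Lemmas~\ref{theorem1}, \ref{theorem2}, and \ref{theorem3} gives the theorem, which is precisely the composition argument you lay out. The norm mismatch you flag (pointwise $\infty$-norm bounds for the ODE half-steps versus a per-mode von~Neumann bound for the ADI step) is a genuine subtlety that the paper itself does not reconcile, so your more detailed account is, if anything, more careful than the original.
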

\begin{proof}
  Combining the results of Lemma \ref{theorem1}, \ref{theorem2}, \ref{theorem3} gives the theorem.
\end{proof}

\begin{remark}
Since the homogeneous Dirichlet boundary condition~(\ref{BC}) is used in this paper, the solution can always be extended into a periodic function with a convergent Fourier series expansion if the solution is smooth. Thus, for smooth solutions, the proposed operator splitting  method is unconditionally stable.
\end{remark}

\section{Discrete maximum principle}\label{sec4}
\begin{lemma}
 Let matrix $\mathbf{C}_x$ be defined as follows:
 \begin{align}
\mathbf{C}_x&=\begin{pmatrix}
-c^{\alpha}_0& 0&\cdots&\cdots&\cdots&0\\
-c^{\alpha}_{1}& -c^{\alpha}_0&-c^{\alpha}_{-1}&\cdots&\cdots&-c^{\alpha}_{-M+1}\\
-c^{\alpha}_{2}& -c^{\alpha}_{1}&-c^{\alpha}_0& -c^{\alpha}_{-1}&\cdots&-c^{\alpha}_{-M+2}\\
\vdots& \vdots&\ddots&\ddots&\ddots&\vdots\\
\vdots& \vdots&\cdots& -c^{\alpha}_{1}&-c^{\alpha}_0&-c^{\alpha}_{-1}\\
0& 0&\cdots&0&0&-c^{\alpha}_0\\
\end{pmatrix}_{(M_x+1)\times (M_x+1)}.
\end{align}
Then, $\mathbf{C}_x$ is strictly diagonally dominant~\cite{Celik}, i.e.
 \begin{align}\label{strict}
 |c_{ii}|=c^{\alpha}_0>\sum\limits_{j\neq i}|c_{ij}|, \quad \textrm{for}\ i=1,  \cdots, M_x+1.
 \end{align}
Similarly, we can define $(M_y+1)\times (M_y+1)$ square matrix $\mathbf{C}_y$ and $(M_z+1)\times (M_z+1)$ square matrix $\mathbf{C}_z$, which are also strictly diagonally dominant.
\end{lemma}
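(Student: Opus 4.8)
The plan is to verify the strict diagonal dominance condition (\ref{strict}) one row at a time, handling the two boundary rows separately from the $M_x-1$ genuine interior rows and leaning entirely on the three properties collected in Lemma~\ref{Lemma1}.

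First I would record that every diagonal entry of $\mathbf{C}_x$ equals $-c^{\alpha}_0$, so the first property of Lemma~\ref{Lemma1} gives $|c_{ii}|=c^{\alpha}_0>0$ uniformly in $i$; this pins down the left-hand side of (\ref{strict}) for all rows at once. The first and last rows are then immediate: by construction their only nonzero entry is the diagonal $-c^{\alpha}_0$, so the off-diagonal sum vanishes and (\ref{strict}) reduces to the already-established inequality $c^{\alpha}_0>0$.

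The content lies in the interior rows. For such a row the off-diagonal entries form the symmetric banded pattern $-c^{\alpha}_{i-j}$ with $j\neq i$, and the second property of Lemma~\ref{Lemma1}, namely $c^{\alpha}_p=c^{\alpha}_{-p}\leq 0$ for $p\neq 0$, lets me replace each $|c_{ij}|$ by $|c^{\alpha}_{i-j}|$. The off-diagonal sum is then a sum of $|c^{\alpha}_p|$ taken over a contiguous range of nonzero shifts $p$, which is precisely the quantity controlled by the third property of Lemma~\ref{Lemma1}; that property bounds it strictly by $c^{\alpha}_0$, which is exactly (\ref{strict}). The strictness is what matters here, and it is already built into Lemma~\ref{Lemma1}, since each one-sided portion of the band is a proper truncation of the full symmetric summation. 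The identical reasoning applies verbatim to $\mathbf{C}_y$ and $\mathbf{C}_z$ after relabelling $M_x$ as $M_y$ or $M_z$.

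The one place demanding care---bookkeeping rather than a genuine obstacle---is aligning the row index of the $(M_x+1)\times(M_x+1)$ matrix, which carries both boundary nodes, with the interior index $i=1,\dots,M_x-1$ appearing in the summation range of Lemma~\ref{Lemma1}. Once this index shift is fixed so that the off-diagonal band of each interior row is matched with the correct range in Lemma~\ref{Lemma1}, the strict bound is supplied directly and no further estimation is required.
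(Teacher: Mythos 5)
The paper itself offers no proof of this lemma---it is asserted with a citation to \cite{Celik}---so your row-by-row verification is supplying an argument where the paper has none. The skeleton is the right one: every diagonal entry equals $-c^{\alpha}_{0}$, so $|c_{ii}|=c^{\alpha}_{0}>0$; the first and last rows have no nonzero off-diagonal entries and are trivial; and the interior rows reduce to a partial-sum estimate on the coefficients $c^{\alpha}_{p}$.

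There is, however, a concrete mismatch that your ``bookkeeping'' paragraph does not actually resolve. The interior row of $\mathbf{C}_x$ attached to the grid point $i\in\{1,\dots,M_x-1\}$ (matrix row $i+1$) contains the entries $-c^{\alpha}_{i-s}$ for \emph{all} columns $s=0,1,\dots,M_x$, so its off-diagonal absolute row sum is $\sum_{p=i-M_x,\,p\neq 0}^{\,i}|c^{\alpha}_{p}|$. The third property of Lemma~\ref{Lemma1} bounds only $\sum_{p=i-M_x+1,\,p\neq 0}^{\,i-1}|c^{\alpha}_{p}|$, i.e.\ the band over the interior columns; the matrix as written carries the two additional entries $-c^{\alpha}_{i}$ and $-c^{\alpha}_{i-M_x}$ in the first and last columns, and no relabelling of the row index makes the two ranges coincide---the row sum exceeds the quantity controlled by Lemma~\ref{Lemma1} by $|c^{\alpha}_{i}|+|c^{\alpha}_{i-M_x}|$. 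The conclusion is still correct, but to reach it you need the stronger fact underlying Lemma~\ref{Lemma1}, namely $\sum_{p\in\mathbb{Z}}c^{\alpha}_{p}=0$ (evaluate the generating function $|2\sin(\theta/2)|^{\alpha}$ at $\theta=0$), which combined with $c^{\alpha}_{p}\leq 0$ for $p\neq 0$ gives $\sum_{p\neq 0}|c^{\alpha}_{p}|=c^{\alpha}_{0}$ and hence a strict bound for every proper finite partial sum when $1<\alpha<2$; this is the form of the estimate actually established in \cite{Celik}. (Note also that at $\alpha=2$ one has $c^{2}_{0}=2$, $c^{2}_{\pm1}=-1$ and $c^{2}_{p}=0$ for $|p|\geq 2$, so the dominance is weak rather than strict; that degeneracy is already present in the paper's own Lemma~\ref{Lemma1} and is not your error.)
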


\begin{lemma}\label{Lemma3}\cite{Shen2016,Tang2016}
Let matrix $\mathbf{B}\in \mathbb{R}^{(M+1)\times(M+1)}$, $\mathbf{A}=a\mathbf{I}-\mathbf{B}$, where $a>0$, $\mathbf{I}$ is the identity matrix with same size of $\mathbf{B}$ and $\mathbf{B}$ is a negative
diagonally dominant matrix, i.e.
$$\forall\ i=1,\cdots, M+1,\quad  b_{ii}\leq0, \quad {\rm and}\quad b_{ii}+\sum_{j\neq i}|b_{ij}|\leq 0,$$
 then $\mathbf{A}$ is invertible and
 \begin{align}\label{inverse}
\|\mathbf{A}^{-1}\|_{\infty}\leq \frac{1}{a}.
 \end{align}
\end{lemma}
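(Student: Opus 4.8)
The plan is to prove this in two parts: first establish invertibility of $\mathbf{A}=a\mathbf{I}-\mathbf{B}$, then bound the infinity norm of its inverse. For invertibility, I would show that $\mathbf{A}$ is itself strictly diagonally dominant. Its diagonal entries are $a_{ii}=a-b_{ii}$, and since $a>0$ and $b_{ii}\le 0$ by hypothesis, we have $a_{ii}=a-b_{ii}\ge a>0$. Its off-diagonal entries are $a_{ij}=-b_{ij}$ for $j\neq i$, so $\sum_{j\neq i}|a_{ij}|=\sum_{j\neq i}|b_{ij}|$. The negative diagonal dominance of $\mathbf{B}$ gives $\sum_{j\neq i}|b_{ij}|\le -b_{ii}$, hence $a_{ii}-\sum_{j\neq i}|a_{ij}|=a-b_{ii}-\sum_{j\neq i}|b_{ij}|\ge a-b_{ii}+b_{ii}=a>0$. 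Thus $\mathbf{A}$ is strictly diagonally dominant, and by the Levy--Desplanques theorem it is invertible.

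For the norm bound, I would argue directly from the defining system $\mathbf{A}\mathbf{x}=\mathbf{y}$, which by definition of the induced matrix norm suffices: $\|\mathbf{A}^{-1}\|_\infty=\sup_{\mathbf{y}\neq 0}\|\mathbf{x}\|_\infty/\|\mathbf{y}\|_\infty$, so it is enough to show $\|\mathbf{x}\|_\infty\le \frac{1}{a}\|\mathbf{y}\|_\infty$ for every solution. Let $i$ be an index where $|x_i|=\|\mathbf{x}\|_\infty$. From the $i$-th equation $a_{ii}x_i+\sum_{j\neq i}a_{ij}x_j=y_i$, I would isolate the diagonal term and apply the triangle inequality:
\begin{align}
a_{ii}|x_i|\le |y_i|+\sum_{j\neq i}|a_{ij}||x_j|\le \|\mathbf{y}\|_\infty+\Big(\sum_{j\neq i}|a_{ij}|\Big)\|\mathbf{x}\|_\infty. \notag
\end{align}
Substituting $a_{ii}=a-b_{ii}$, $|x_i|=\|\mathbf{x}\|_\infty$, and the bound $\sum_{j\neq i}|a_{ij}|=\sum_{j\neq i}|b_{ij}|\le -b_{ii}$ yields $(a-b_{ii})\|\mathbf{x}\|_\infty\le \|\mathbf{y}\|_\infty+(-b_{ii})\|\mathbf{x}\|_\infty$, and after cancelling the $-b_{ii}\|\mathbf{x}\|_\infty$ terms from both sides this collapses to $a\|\mathbf{x}\|_\infty\le \|\mathbf{y}\|_\infty$. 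Dividing by $a>0$ gives the claim.

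The argument is essentially routine, so I do not anticipate a genuine obstacle; the only point requiring care is the bookkeeping in the cancellation step, where one must keep the sign of $b_{ii}$ straight and verify that the coefficient $-b_{ii}$ appearing from the off-diagonal estimate exactly matches the $-b_{ii}$ contributed by $a_{ii}=a-b_{ii}$, so that both disappear and leave the clean factor $a$. I would also note explicitly that picking the index realizing the maximum of $|x_i|$ is what allows replacing $|x_j|$ by $\|\mathbf{x}\|_\infty$ in the sum, which is the mechanism that produces the uniform bound independent of the structure of $\mathbf{B}$ beyond its diagonal dominance.
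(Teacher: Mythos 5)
Your proof is correct; the paper itself gives no proof of this lemma, only a citation to [Shen2016, Tang2016], and your argument (strict diagonal dominance of $\mathbf{A}$ for invertibility, then the standard maximum-component estimate on $\mathbf{A}\mathbf{x}=\mathbf{y}$ with the cancellation of $-b_{ii}$) is exactly the standard proof found in those sources. No gaps.
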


In the section, we will show that, under certain reasonable time step constraint, the discrete maximum principle for the proposed method is valid.
\begin{theorem}\label{th4}
Assume that the initial value $u_0(x)$ satisfies $\max\limits_{x\in\bar{\Omega}}|u_0(x)|\leq 1$, then the numerical solution $U^{n}_{i,j,k}$ of
(\ref{stage1}), (\ref{ADI1})-(\ref{ADI3}) and (\ref{stage3}) satisfies the discrete maximum principle, i.e.,  $\|U^n\|_{\infty}\leq 1$ for any $n=0,1,\cdots,N$
if the time step satisfies
 \begin{align}\label{condition}
\frac{\alpha+2}{12}\frac{\max(h^{\alpha}_1,h^{\alpha}_2,h^{\alpha}_3)}{\varepsilon^2c^{\alpha}_0} \leq \Delta t\leq \frac{12 - \alpha}{6} \frac{\min(h^{\alpha}_1,h^{\alpha}_2,h^{\alpha}_3)}{\varepsilon^2c^{\alpha}_0}.
  \end{align}
\end{theorem}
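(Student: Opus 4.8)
The plan is to argue by induction on $n$ that each of the three sub-steps of the splitting maps the discrete unit ball $\{\,\|\cdot\|_\infty\le 1\,\}$ into itself. The base case holds since $U^0_{i,j,k}=u_0(x_i,y_j,z_k)$ with $\max_{\bar\Omega}|u_0|\le1$. Assuming $\|U^n\|_\infty\le1$, Lemma~\ref{theorem1} applied to (\ref{analytical1}) gives $\|\tilde u^{n+1}\|_\infty\le\max\{\|U^n\|_\infty,1\}=1$, and Lemma~\ref{theorem2} applied to (\ref{analytical2}) gives $\|U^{n+1}\|_\infty=\|\hat u^{n+1}\|_\infty\le\max\{\|\bar u^{n+1}\|_\infty,1\}$. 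Hence the whole theorem reduces to proving that the intermediate Crank--Nicolson ADI step (\ref{ADI1})--(\ref{ADI3}) is non-expansive in the maximum norm, i.e. $\|\bar u^{n+1}\|_\infty\le\|\bar u^n\|_\infty$. This single estimate is the crux, and it is the only place where the two-sided window (\ref{condition}) is used.

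To handle the ADI step I would eliminate the intermediate unknowns $\bar u^{*},\bar u^{**}$ and use that operators acting in different coordinate directions act on disjoint indices (hence commute), rewriting (\ref{ADI1})--(\ref{ADI3}) as $\bar u^{n+1}=\mathbf M_x\mathbf M_y\mathbf M_z\,\bar u^n$ with $\mathbf M_x=\mathbf A_x^{-1}\mathbf P_x$, where $\mathbf A_x=\mathcal A^\alpha_x-\beta_x\Delta^\alpha_x$ and $\mathbf P_x=\mathcal A^\alpha_x+\beta_x\Delta^\alpha_x$ are one-dimensional in the $x$-index only (and analogously in $y,z$). Since each factor acts on a single tensor slot, its $\infty$-norm is unchanged by tensoring with identities, and submultiplicativity of the induced norm gives $\|\bar u^{n+1}\|_\infty\le\|\mathbf M_x\|_\infty\|\mathbf M_y\|_\infty\|\mathbf M_z\|_\infty\|\bar u^n\|_\infty$. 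Thus it suffices to show $\|\mathbf M_x\|_\infty\le1$ in each direction, which I would obtain from $\|\mathbf M_x\|_\infty\le\|\mathbf A_x^{-1}\|_\infty\,\|\mathbf P_x\|_\infty$ by bounding each factor by $1$, controlling $\mathbf P_x$ with the upper end of (\ref{condition}) and $\mathbf A_x^{-1}$ with the lower end.

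Writing out the entries of $\mathbf P_x$ through the coefficients $c^\alpha_s$, Lemma~\ref{Lemma1} shows every off-diagonal entry is nonnegative, while the diagonal entry $(1-\tfrac\alpha{12})-\beta_x c^\alpha_0$ is nonnegative exactly when $\beta_x c^\alpha_0\le\tfrac{12-\alpha}{12}$; substituting $\beta_x=\Delta t\varepsilon^2/(2h_1^\alpha)$ and using $\min_i h_i^\alpha\le h_1^\alpha$, this is precisely the upper bound in (\ref{condition}). With all entries nonnegative, $\|\mathbf P_x\|_\infty$ equals the largest row sum, which is dominated by the row sums of the averaging operator $\mathcal A^\alpha_x$ (at most $1$), so $\|\mathbf P_x\|_\infty\le1$. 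For the inverse I would apply Lemma~\ref{Lemma3} with $a=1$, i.e. verify $\mathbf B=\mathbf I-\mathbf A_x$ is negative diagonally dominant. The lower bound in (\ref{condition}) gives $\beta_x c^\alpha_0\ge\tfrac{\alpha+2}{24}$, which combined with the identity $|c^\alpha_1|=\tfrac{\alpha}{\alpha+2}c^\alpha_0$ read off from the recursion (\ref{calpha}) forces the nearest-neighbour entries $\tfrac\alpha{24}+\beta_x c^\alpha_{\pm1}$ of $\mathbf A_x$ to be nonpositive, so all off-diagonals of $\mathbf A_x$ are $\le0$; it also yields $\beta_x c^\alpha_0\ge\tfrac\alpha{12}$, so the diagonal entry $a_{ii}\ge1$. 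The diagonal-dominance inequality then collapses to the single requirement that every row sum of $\mathbf A_x$ be $\ge1$, whence $\|\mathbf A_x^{-1}\|_\infty\le1$.

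The hard part will be exactly this last row-sum condition at the rows adjacent to the boundary. Writing $D_i:=\sum_s c^\alpha_s=c^\alpha_0-\sum_{s\neq0}|c^\alpha_s|>0$ (positivity and the sum ranging over the entries present in row $i$ are given by Lemma~\ref{Lemma1}), the row sum of $\mathbf A_x$ equals $(\text{row sum of }\mathcal A^\alpha_x)+\beta_x D_i$; for interior rows this is $1+\beta_x D_i\ge1$ automatically. However, the homogeneous Dirichlet condition makes $\mathcal A^\alpha_x$ lose mass at the boundary-adjacent rows, dropping its row sum from $1$ to $1-\tfrac\alpha{24}$, so there one must verify $\beta_x D_i\ge\tfrac\alpha{24}$. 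This is the delicate bookkeeping I anticipate: one shows the truncated (half-range) partial sums at those rows satisfy $D_i>\tfrac12 c^\alpha_0$, and then $\beta_x D_i>\tfrac12\beta_x c^\alpha_0\ge\tfrac{\alpha+2}{48}\ge\tfrac\alpha{24}$, the last step using $\alpha\le2$. This makes clear that the window (\ref{condition}) is not an artifact: the upper bound keeps $\mathbf P_x$ entrywise nonnegative, while the lower bound both signs the off-diagonals of $\mathbf A_x$ and dominates the boundary mass defect. Once the boundary row sums are secured, $\|\mathbf M_x\|_\infty\le\|\mathbf A_x^{-1}\|_\infty\|\mathbf P_x\|_\infty\le1$ in each direction, the ADI step is non-expansive, and the induction closes.
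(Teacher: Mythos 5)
Your proposal is correct and follows essentially the same route as the paper's proof: induction on $n$, reduction to non-expansiveness of the ADI step in the maximum norm via the factorization into one-directional operators, entrywise nonnegativity and row sums $\le 1$ of the ``plus'' factors under the upper time-step bound, and Lemma~\ref{Lemma3} applied to the negatively diagonally dominant part of the ``minus'' factors (with the off-diagonal signs forced by the lower time-step bound) to get $\|(\mathcal{A}^{\alpha}_{x}-\beta_x\Delta^{\alpha}_x)^{-1}\|_\infty\le 1$. The only divergence is the boundary bookkeeping you single out as the hard part: the paper works with $(M+1)\times(M+1)$ matrices that keep the boundary rows and columns (with the corner entries set to the diagonal value, as explained in the remark after the theorem), so every interior row of $\mathcal{A}^{\alpha}_{x}$ sums exactly to $1$ and no mass-defect compensation --- in particular no auxiliary fact such as $\sum_{s\ge 1}|c^{\alpha}_s|\le c^{\alpha}_0/2$, which is true but not contained in Lemma~\ref{Lemma1} --- is ever needed.
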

\begin{proof}
   We prove the theorem by mathematical induction. Obviously, $\|U^n\|_{\infty}\leq 1$ for $n=0$ since $U^0_{i,j,k}=u_0(x_i,y_j,z_k).$
Assume that $\|U^k\|_{\infty}\leq 1$ ($k\leq n$) is valid, we want to show that $\|U^{n+1}\|_{\infty}\leq 1$. From (\ref{analytical1}), one can easily obtain that
\begin{align}\label{condition1}
 \|\tilde{u}^{n+1}\|_{\infty}\leq 1.
\end{align}
Next, we look at the numerical solution $\bar{u}^{n+1}$ of (\ref{ADI1})-(\ref{ADI3}). Since $\bar{u}^{n}=\tilde{u}^{n+1}$, one has
\begin{align}\label{condition2}
 \|\bar{u}^{n}\|_{\infty}\leq 1.
\end{align}

Let $\bar{u}^n=(\bar{u}^n_{i,j,k})_{(M_x+1)\times (M_y+1)\times (M_z+1)}$ be a 3D matrix including the boundary points, which are zero values. Denfine matrix $\mathbf{D}_x$ as follows
 \begin{align}
\mathbf{D}_x&=\begin{pmatrix}
-2&0&0&\cdots&\cdots&0\\
1& -2&1&0&\cdots&0\\
0&1&-2&1&\ddots&0\\
\vdots&\ddots& \ddots&\ddots&\ddots&\vdots\\
0&\cdots&0&1&-2&1\\
0&\cdots&0&0&0&-2\\
\end{pmatrix}_{(M_x+1)\times(M_x+1)}.
\end{align}
Another two square matrices $\mathbf{D}_y$ and $\mathbf{D}_z$ can be defined similarly.

From the definition of the operators $\Delta^{\alpha}_x,\Delta^{\alpha}_y,\Delta^{\alpha}_z, I$ in (\ref{sp1})-(\ref{sp4})
and the zero value on boundary  points, one can see that the application of operator
$\mathcal{A}^{\alpha}_{z}+\beta_z\Delta^{\alpha}_z$ to $\bar{u}^n$ is equivalent to multiply
each vector in the third dimension of $\bar{u}^n$ by  the matrix $\mathbf{I}_z+\frac{\alpha}{24}\mathbf{D}_z+\beta_z\mathbf{C}_z$ ($\mathbf{I}_z$ is the identity matrix with size $(M_z+1)\times(M_z+1)$),
the application of operator $\mathcal{A}^{\alpha}_{y}+\beta_y\Delta^{\alpha}_y$ to $\bar{u}^n$ is equivalent to multiply
each vector in the second dimension of $\bar{u}^n$ by  the matrix $\mathbf{I}_y+\frac{\alpha}{24}\mathbf{D}_y+\beta_y\mathbf{C}_y$ ($\mathbf{I}_y$ is the identity matrix with size $(M_y+1)\times(M_y+1)$),
and the application of operator $\mathcal{A}^{\alpha}_{x}+\beta_x\Delta^{\alpha}_x$ to $\bar{u}^n$ is equivalent to multiply
each vector in the first dimension of $\bar{u}^n$ by  the matrix $\mathbf{I}_x+\frac{\alpha}{24}\mathbf{D}_x+\beta_x\mathbf{C}_x$ ($\mathbf{I}_x$ is the identity matrix with size $(M_x+1)\times(M_x+1)$).

In addition, (\ref{ADI1})-(\ref{ADI3}) is equivalent to
\begin{align}
\left[\left(\mathcal{A}^{\alpha}_{x}-\beta_x \Delta^{\alpha}_x\right)\left(\mathcal{A}^{\alpha}_{y}-\beta_y \Delta^{\alpha}_y\right)\left(\mathcal{A}^{\alpha}_{z}-\beta_z \Delta^{\alpha}_z\right)\bar{u}^{n+1}\right]_{i,j,k}=\left[\left(\mathcal{A}^{\alpha}_{x}+\beta_x \Delta^{\alpha}_x\right)\left(\mathcal{A}^{\alpha}_{y}+\beta_y \Delta^{\alpha}_y\right)\left(\mathcal{A}^{\alpha}_{z}+\beta_z \Delta^{\alpha}_z\right)\bar{u}^n\right]_{i,j,k},
\end{align}
which further yields
\begin{align}
\bar{u}_{i,j,k}^{n+1}=\left[\left(\mathcal{A}^{\alpha}_{z}-\beta_z \Delta^{\alpha}_z\right)^{-1}\left(\mathcal{A}^{\alpha}_{y}-\beta_y \Delta^{\alpha}_y\right)^{-1}\left(\mathcal{A}^{\alpha}_{x}-\beta_x \Delta^{\alpha}_x\right)^{-1}\left(\mathcal{A}^{\alpha}_{x}+\beta_x \Delta^{\alpha}_x\right)\left(\mathcal{A}^{\alpha}_{y}+\beta_y\Delta^{\alpha}_y\right)\left(\mathcal{A}^{\alpha}_{z}+\beta_z \Delta^{\alpha}_z\right)\bar{u}^n\right]_{i,j,k},
\end{align}
where the application of operator $(\mathcal{A}^{\alpha}_{z}-\beta_z\Delta^{\alpha}_z)^{-1}$
to a 3D matrix is equivalent to multiply  each vector in the third dimension of
 this 3D matrix by the matrix $(\mathbf{I}_z+\frac{\alpha}{24}\mathbf{D}_z-\beta_z\mathbf{C}_z)^{-1}$,
  the application of operator $(\mathcal{A}^{\alpha}_{y}-\beta_y\Delta^{\alpha}_y)^{-1}$ to a 3D matrix is equivalent to multiply
    each vector in the second dimension of this 3D matrix by the matrix $(\mathbf{I}_y+\frac{\alpha}{24}\mathbf{D}_y-\beta_y\mathbf{C}_y)^{-1}$,
     and the application of operator $(\mathcal{A}^{\alpha}_{x}-\beta_x\Delta^{\alpha}_x)^{-1}$ to a 3D matrix is equivalent to multiply
      each vector in the first dimension of this 3D matrix by the matrix $(\mathbf{I}_x+\frac{\alpha}{24}\mathbf{D}_x-\beta_x\mathbf{C}_x)^{-1}$.

Therefore, it is easy to check that $\bar{u}^{n+1}=(\bar{u}^{n+1}_{i,j,k})_{(M_x+1)\times (M_y+1)\times (M_z+1)}$ can be obtained from $\bar{u}^{n}=(\bar{u}^{n}_{i,j,k})_{(M_x+1)\times (M_y+1)\times (M_z+1)}$ through a series of one-dimensional vector transformations as follows:
\begin{enumerate}[(i)]
         \item Multiplying each vector in the third dimension of $\bar{u}^n$ by  the matrix $\mathbf{I}_z+\frac{\alpha}{24}\mathbf{D}_z+\beta_z\mathbf{C}_z$,
         \item Multiplying each vector in the second dimension of resulting  matrix in (i) by  the matrix $\mathbf{I}_y+\frac{\alpha}{24}\mathbf{D}_y+\beta_y \mathbf{C}_y$,
         \item Multiplying each vector in the first dimension of resulting  matrix in (ii) by  the matrix $\mathbf{I}_x+\frac{\alpha}{24}\mathbf{D}_x+\beta_x \mathbf{C}_x$,
         \item Multiplying each vector in the first dimension of resulting  matrix in (iii) by  the matrix $(\mathbf{I}_x+\frac{\alpha}{24}\mathbf{D}_x-\beta_x \mathbf{C}_x)^{-1}$,
         \item Multiplying each vector in the second dimension of resulting  matrix in (iv) by  the matrix $(\mathbf{I}_y+\frac{\alpha}{24}\mathbf{D}_y-\beta_y \mathbf{C}_y)^{-1}$,
         \item Multiplying each vector in the third dimension of resulting matrix in (v) by  the matrix $(\mathbf{I}_z+\frac{\alpha}{24}\mathbf{D}_z-\beta_z \mathbf{C}_z)^{-1}$.
       \end{enumerate}

If condition (\ref{condition}) is satisfied, then
 \begin{align}
1-\frac{\alpha}{12}-\beta_z c^{\alpha}_0>0,
  \end{align}
and
\begin{align}
\sum_{j}\left|\delta_{i,j}+\frac{\alpha}{24}d_{i,j}+\beta_z c_{i,j}\right|=1-\frac{\alpha}{12}-\beta_z c^{\alpha}_0<1, \quad {\rm for }\quad  i=1, M_z+1,
  \end{align}
 \begin{align}
\sum_{j}\left|\delta_{i,j}+\frac{\alpha}{24}d_{i,j}+\beta_z c_{i,j}\right|&\leq 1-\frac{\alpha}{12}-\beta_z c^{\alpha}_0+\frac{\alpha}{24}+\frac{\alpha}{24}+\beta_z \sum_{j\neq i}\left|c_{i,j}\right|\nonumber\\
&=1-\beta_z\left(c^{\alpha}_0-\sum_{j\neq i}|c_{i,j}|\right)\nonumber\\
&<1, \qquad {\rm for }\quad  i=2,\cdots, M_z.
  \end{align}
Thus,
 \begin{align}\label{condition3}
\left\|\mathbf{I}_z+\frac{\alpha}{24}\mathbf{D}_z+\beta_z\mathbf{C}_z\right\|_{\infty}< 1.
  \end{align}

By using (\ref{condition2}) and (\ref{condition3}), one gets
 \begin{align}
\left\|\left(\mathcal{A}^{\alpha}_{z}+\beta_z\Delta^{\alpha}_z\right)\bar{u}^n\right\|_{\infty}\leq \left\|\mathbf{I}_z+\frac{\alpha}{24}\mathbf{D}_z+\beta_z\mathbf{C}_z\right\|_{\infty}\cdot  \|\bar{u}^{n}\|_{\infty} \leq 1.
  \end{align}

Similarly,
\begin{align}
\left\|\mathbf{I}_y+\frac{\alpha}{24}\mathbf{D}_y+\beta_y\mathbf{C}_y\right\|_{\infty}< 1,\quad  \left\|\mathbf{I}_x+\frac{\alpha}{24}\mathbf{D}_x+\beta_x\mathbf{C}_x\right\|_{\infty}< 1.
\end{align}
Therefore,
 \begin{align}
\left\|\left(\mathcal{A}^{\alpha}_{y}+\beta_y\Delta^{\alpha}_y\right)\left(\mathcal{A}^{\alpha}_{z}+\beta_z\Delta^{\alpha}_z\right)\bar{u}^n\right\|_{\infty}\leq 1,
  \end{align}
and
 \begin{align}\label{middle}
\left\|\left(\mathcal{A}^{\alpha}_{x}+\beta_x\Delta^{\alpha}_x\right)\left(\mathcal{A}^{\alpha}_{y}+\beta_y\Delta^{\alpha}_y\right)\left(\mathcal{A}^{\alpha}_{z}+\beta_z \Delta^{\alpha}_z\right)\bar{u}^n\right\|_{\infty}\leq 1.
  \end{align}

If condition (\ref{condition}) is satisfied, then
\begin{align}
 \frac{\alpha}{12}-\beta_x c^{\alpha}_{0}&=\frac{\alpha}{12}-\frac{\Delta t\varepsilon^2}{2h^{\alpha}_1}c^{\alpha}_{0}\leq0, \\
 -\frac{\alpha}{24}-\beta_x c^{\alpha}_{1}&=-\frac{\alpha}{24}-\frac{\Delta t\varepsilon^2}{2h^{\alpha}_1}(1-\frac{\alpha+1}{\frac{\alpha}{2}+1})c^{\alpha}_0=-\frac{\alpha}{24}+\frac{\Delta t\varepsilon^2}{2h^{\alpha}_1}\frac{\alpha}{\alpha+2}c^{\alpha}_0\geq0,
\end{align}
where Eq.(\ref{calpha}) is used.

Now for  matrix $-\frac{\alpha}{24}\mathbf{D}_x+\beta_x\mathbf{C}_x$, one has
\begin{align}
\sum_{j\neq i}\left|-\frac{\alpha}{24}d_{i,j}+\beta_x c_{i,j}\right|=0\leq-\frac{\alpha}{12}+\beta_x c^{\alpha}_{0}=-\left(-\frac{\alpha}{24}d_{i,i}+\beta_x c_{i,i}\right), \quad {\rm for }\quad  i=1, M_x+1,
  \end{align}
  and
 \begin{align}
\sum_{j\neq i}\left|-\frac{\alpha}{24}d_{i,j}+\beta_x c_{i,j}\right|&=\sum_{j\neq {i,i\pm1}}\left|-\frac{\alpha}{24}d_{i,j}+\beta_x c_{i,j}\right|+(-\frac{\alpha}{24}-\beta_x c^{\alpha}_{1})+(-\frac{\alpha}{24}-\beta_x c^{\alpha}_{-1}),\nonumber\\
&=-\frac{\alpha}{12}+\sum_{j\neq {i,i\pm1}}\beta_x\left| c_{i,j}\right|+(-\beta_x c^{\alpha}_{1})+(-\beta_x c^{\alpha}_{-1})\nonumber\\
&=-\frac{\alpha}{12}+\sum_{j\neq {i}}\beta_x\left| c_{i,j}\right|\nonumber\\
&\leq-\frac{\alpha}{12}+\beta_x\left| c_{i,i}\right|\nonumber\\
&=-\frac{\alpha}{12}+\beta_x c^{\alpha}_0\nonumber\\
&=-\left(-\frac{\alpha}{24}d_{i,i}+\beta_x c_{i,i}\right),  \qquad {\rm for }\quad i=2,\cdots, M_x.
  \end{align}
  Thus,  $-\frac{\alpha}{24}\mathbf{D}_x+\beta_x\mathbf{C}_x$ is a negative diagonally dominant matrix. By using Lemma~\ref{Lemma3}, one has
 \begin{align}\label{condition4}
\left\|\left(\mathbf{I}_x+\frac{\alpha}{24}\mathbf{D}_x-\beta_x\mathbf{C}_x\right)^{-1}\right\|_{\infty}\leq 1.
  \end{align}

Applying the above condition and using (\ref{middle}), one has
\begin{align}
\left\|\left(\mathcal{A}^{\alpha}_{x}-\beta_x\Delta^{\alpha}_x\right)^{-1}\left(\mathcal{A}^{\alpha}_{x}+\beta_x\Delta^{\alpha}_x\right)\left(\mathcal{A}^{\alpha}_{y}+\beta_y \Delta^{\alpha}_y\right)\left(\mathcal{A}^{\alpha}_{z}+\beta_z\Delta^{\alpha}_z\right)\bar{u}^n\right\|_{\infty}\leq 1,
  \end{align}

Similarly,  if condition (\ref{condition}) is satisfied, one has
 \begin{align}
\left\|\left(\mathbf{I}_y+\frac{\alpha}{24}\mathbf{D}_y-\beta_y\mathbf{C}_y\right)^{-1}\right\|_{\infty}\leq 1,
\quad
\left\|\left(\mathbf{I}_z+\frac{\alpha}{24}\mathbf{D}_z-\beta_z\mathbf{C}_z\right)^{-1}\right\|_{\infty}\leq 1.
  \end{align}
Thus,
\begin{align}
\left\|\left(\mathcal{A}^{\alpha}_{y}-\beta_y\Delta^{\alpha}_y\right)^{-1}\left(\mathcal{A}^{\alpha}_{x}-\beta_x\Delta^{\alpha}_x\right)^{-1}\left(\mathcal{A}^{\alpha}_{x}+\beta_x \Delta^{\alpha}_x\right)\left(\mathcal{A}^{\alpha}_{y}+\beta_y\Delta^{\alpha}_y\right)\left(\mathcal{A}^{\alpha}_{z}+\beta_z \Delta^{\alpha}_z\right)\bar{u}^n\right\|_{\infty}\leq 1,
  \end{align}
  and
\begin{align}
\left\|\left(\mathcal{A}^{\alpha}_{z}-\beta_z\Delta^{\alpha}_z\right)^{-1}\left(\mathcal{A}^{\alpha}_{y}-\beta_y\Delta^{\alpha}_y\right)^{-1}\left(\mathcal{A}^{\alpha}_{x}-\beta_x \Delta^{\alpha}_x\right)^{-1}\left(\mathcal{A}^{\alpha}_{x}+\beta_x\Delta^{\alpha}_x\right)\left(\mathcal{A}^{\alpha}_{y}+\beta_y\Delta^{\alpha}_y\right)\left(\mathcal{A}^{\alpha}_{z}+\beta_z \Delta^{\alpha}_z\right)\bar{u}^n\right\|_{\infty}\leq 1.
  \end{align}

  This yields
\begin{align}
\left\|\bar{u}^{n+1}\right\|_{\infty}\leq 1.
  \end{align}

  From (\ref{analytical2}), one can obtain that
\begin{align}
 \|\hat{u}^{n+1}\|_{\infty}\leq 1.
\end{align}
Thus,
\begin{align}
 \|U^{n+1}\|_{\infty}= \|\hat{u}^{n+1}\|_{\infty}\leq 1.
\end{align}
This completes the proof of the theorem.
\end{proof}

\begin{remark}
The first entry in the first row and the last entry in the last row in both matrices $\mathbf{C}_{x(y,z)}$ and $\mathbf{D}_{x(y,z)}$ can take arbitrary numbers due to homogeneous Dirichlet boundary conditions. Here we set these two elements the same as the diagonal entry of matrices $\mathbf{C}_{x(y,z)}$ and $\mathbf{D}_{x(y,z)}$ so that Lemma~\ref{Lemma3} can be directly applied when obtaining the estimate (\ref{condition4}).
\end{remark}

The theoretical results in previous sections also hold for the second-order scheme (\ref{stage1}), (\ref{ADI11})-(\ref{ADI31}) and (\ref{stage3}) with some minor changes. For example, Theorem \ref{th4} will become
\begin{theorem}
Assume that the initial value $u_0(x)$ satisfies $\max\limits_{x\in\bar{\Omega}}|u_0(x)|\leq 1$, then the numerical solution $U^{n}_{i,j,k}$ of
(\ref{stage1}), (\ref{ADI11})-(\ref{ADI31}) and (\ref{stage3}) satisfies the discrete maximum principle, i.e.,  $\|U^n\|_{\infty}\leq 1$ for any $n=0,1,\cdots,N$
if the time step satisfies
 \begin{align}
 \Delta t\leq 2\frac{\min(h^{\alpha}_1,h^{\alpha}_2,h^{\alpha}_3)}{\varepsilon^2c^{\alpha}_0}.
  \end{align}
\end{theorem}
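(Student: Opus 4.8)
The plan is to run the same induction on $n$ as in the proof of Theorem~\ref{th4}, the only structural change being that the averaging operators $\mathcal{A}^{\alpha}_{x},\mathcal{A}^{\alpha}_{y},\mathcal{A}^{\alpha}_{z}$ are replaced throughout by the identity $I$. The base case $\|U^0\|_{\infty}\leq 1$ is immediate from $U^0_{i,j,k}=u_0(x_i,y_j,z_k)$. Assuming $\|U^n\|_{\infty}\leq 1$, I would first dispose of the two ODE half-steps exactly as before: the analytical formulas (\ref{analytical1}) and (\ref{analytical2}) give $\|\tilde{u}^{n+1}\|_{\infty}\leq 1$ and, at the end, $\|\hat{u}^{n+1}\|_{\infty}\leq 1$. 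Hence the entire argument reduces to showing that the diffusion update (\ref{ADI11})--(\ref{ADI31}) maps $\|\bar{u}^n\|_{\infty}\leq 1$ to $\|\bar{u}^{n+1}\|_{\infty}\leq 1$.

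For the diffusion step I would rewrite the scheme, just as in Theorem~\ref{th4}, as a composition of six one-dimensional transformations acting along the coordinate fibers: three forward multiplications by $\mathbf{I}_{x}+\beta_x\mathbf{C}_x$, $\mathbf{I}_{y}+\beta_y\mathbf{C}_y$, $\mathbf{I}_{z}+\beta_z\mathbf{C}_z$, followed by three multiplications by the inverses $(\mathbf{I}_{x}-\beta_x\mathbf{C}_x)^{-1}$, $(\mathbf{I}_{y}-\beta_y\mathbf{C}_y)^{-1}$, $(\mathbf{I}_{z}-\beta_z\mathbf{C}_z)^{-1}$. Because each transformation acts on one-dimensional slices, it suffices to bound each of these six matrices in the $\infty$-norm by $1$; applying them in turn then yields the bound on $\bar{u}^{n+1}$.

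The two required estimates split cleanly. For a forward matrix such as $\mathbf{I}_{z}+\beta_z\mathbf{C}_z$, Lemma~\ref{Lemma1} gives $c^{\alpha}_0>0$, $c^{\alpha}_p\leq 0$ for $p\neq 0$, and $\sum_{p\neq 0}|c^{\alpha}_p|<c^{\alpha}_0$, so each interior row has absolute row-sum $|1-\beta_z c^{\alpha}_0|+\beta_z\sum_{p\neq 0}|c^{\alpha}_p|$ (the two boundary rows contributing only $|1-\beta_z c^{\alpha}_0|$). Provided $\beta_z c^{\alpha}_0\leq 1$, this equals $(1-\beta_z c^{\alpha}_0)+\beta_z\sum_{p\neq 0}|c^{\alpha}_p|<1$, whence $\|\mathbf{I}_{z}+\beta_z\mathbf{C}_z\|_{\infty}<1$. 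The requirement $\beta_z c^{\alpha}_0\leq 1$ is precisely $\Delta t\leq 2h^{\alpha}_3/(\varepsilon^2 c^{\alpha}_0)$, and taking the minimum over the three directions produces exactly the stated time-step bound. For an inverse matrix such as $(\mathbf{I}_{x}-\beta_x\mathbf{C}_x)^{-1}$, I would invoke Lemma~\ref{Lemma3} with $a=1$ and $\mathbf{B}=\beta_x\mathbf{C}_x$: since $\mathbf{C}_x$ has diagonal $-c^{\alpha}_0<0$ and is strictly diagonally dominant, $\beta_x\mathbf{C}_x$ is negative diagonally dominant for \emph{every} $\beta_x>0$, so Lemma~\ref{Lemma3} gives $\|(\mathbf{I}_{x}-\beta_x\mathbf{C}_x)^{-1}\|_{\infty}\leq 1$.

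The key point, and the reason this theorem is genuinely easier than Theorem~\ref{th4}, is that no lower bound on $\Delta t$ is needed. In Theorem~\ref{th4} the averaging term $\tfrac{\alpha}{24}\mathbf{D}_x$ competed with $\beta_x\mathbf{C}_x$, forcing a lower bound on $\Delta t$ to keep $-\tfrac{\alpha}{24}\mathbf{D}_x+\beta_x\mathbf{C}_x$ negative diagonally dominant (through the sign conditions on $\tfrac{\alpha}{12}-\beta_x c^{\alpha}_0$ and $-\tfrac{\alpha}{24}-\beta_x c^{\alpha}_1$). With the identity in place of $\mathcal{A}^{\alpha}$ that competition vanishes: $\beta_x\mathbf{C}_x$ is diagonally dominant on its own, so only the upper bound ensuring $\|\mathbf{I}+\beta\mathbf{C}\|_{\infty}\leq 1$ survives. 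Combining the six norm estimates yields $\|\bar{u}^{n+1}\|_{\infty}\leq 1$, and then (\ref{analytical2}) gives $\|U^{n+1}\|_{\infty}=\|\hat{u}^{n+1}\|_{\infty}\leq 1$, which closes the induction.
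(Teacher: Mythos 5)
Your proposal is correct and follows essentially the same route as the paper, which simply observes that the proof of Theorem~\ref{th4} goes through with $\mathbf{D}_x,\mathbf{D}_y,\mathbf{D}_z$ replaced by zero matrices. You additionally make explicit the detail the paper leaves implicit — that with the averaging term gone, $\beta\mathbf{C}$ is negative diagonally dominant for every $\beta>0$, so only the upper bound $\beta c^{\alpha}_0\leq 1$ survives and the lower bound on $\Delta t$ disappears — which is exactly why the stated condition is one-sided.
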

\begin{proof}
  To prove the above theorem, the main difference is that the matrices $\mathbf{D}_x$, $\mathbf{D}_y$ and $\mathbf{D}_z$ should be changed into a zero matrix. Other parts of the proof are basically the same as Theorem \ref{th4}.
\end{proof}

\section{Numerical results}\label{sec5}
Our code is written in Matlab and the programs are carried out  on a desktop with Intel CPU i7-4790K (4.00GHz) and  16GB RAM.

\subsection{Convergence and stability study}
In order to numerically test the accuracy of the numerical method, we use exact solutions with sufficient regularity in this subsection as the testing examples.

\begin{example}\label{ex1}
In this example, we consider the 2D  space fractional Allen-Cahn equation with the exact solution
\begin{align}
u(x,y,t)=e^{-t}x^4(1-x)^4y^4(1-y)^4,
\end{align}
so that the exact solution has a sufficient regularity. And in this example, the equation needs to be modified with a source term
\begin{align*}
f(x,y,t)&=\frac{\varepsilon^2}{2\cos(\frac{\alpha\pi}{2})} e^{-t}\Big[\frac{\Gamma(5)}{\Gamma(5-\alpha)}(x^{4-\alpha}+(1-x)^{4-\alpha})- \frac{4\Gamma(6)}{\Gamma(6-\alpha)}(x^{5-\alpha}+(1-x)^{5-\alpha})\\
&+\frac{6\Gamma(7)}{\Gamma(7-\alpha)}(x^{6-\alpha}+(1-x)^{6-\alpha})-\frac{4\Gamma(8)}{\Gamma(8-\alpha)}(x^{7-\alpha}+(1-x)^{7-\alpha})+\frac{\Gamma(9)}{\Gamma(9-\alpha)}(x^{8-\alpha}+(1-x)^{8-\alpha})\Big]y^4(1-y)^4\\
&+\frac{\varepsilon^2}{2\cos(\frac{\alpha\pi}{2})}e^{-t}\Big[\frac{\Gamma(5)}{\Gamma(5-\alpha)}(y^{4-\alpha}+(1-y)^{4-\alpha})-\frac{4\Gamma(6)}{\Gamma(6-\alpha)}(y^{5-\alpha}+(1-y)^{5-\alpha})\\
&+\frac{6\Gamma(7)}{\Gamma(7-\alpha)}(y^{6-\alpha}+(1-y)^{6-\alpha})-\frac{4\Gamma(8)}{\Gamma(8-\alpha)}(y^{7-\alpha}+(1-y)^{7-\alpha})+\frac{\Gamma(9)}{\Gamma(9-\alpha)}(y^{8-\alpha}+(1-y)^{8-\alpha})\Big]x^4(1-x)^4 \\& + e^{-3t}x^{12}(1-x)^{12}y^{12}(1-y)^{12}-2e^{-t}x^4(1-x)^4y^4(1-y)^4.
\end{align*}
The initial condition is given according to this exact solution and $\varepsilon$ is set to be 0.1.
\end{example}

We carry out numerical accuracy test for $1<\alpha\leq2$. We measure the numerical errors $e_1(\Delta t,h_x,h_y,h_z)=u-U(\Delta t, h_x,h_y,h_z)$
and  $e_2(\Delta t,h_x,h_y,h_z)=u-\widetilde{U}(\Delta t, h_x,h_y,h_z)$ at time $T=1$ in the $L^{\infty}$-norm, and compute the convergence orders according to
\begin{align*}
 \textrm{order}_1=\log_2\left(\frac{\parallel e_1(\Delta t,h_x,h_y,h_z)\parallel_{\infty}}{\parallel e_1({\Delta t}/{2},{h_x}/{2},{h_y}/{2},{h_z}/{2})\parallel_{\infty}}\right),
\end{align*}
and
\begin{align*}
 \textrm{order}_2=\log_2\left(\frac{\parallel e_2(\Delta t,h_x,h_y,h_z)\parallel_{\infty}}{\parallel e_2({\Delta t}/{2},{h_x}/{2},{h_y}/{2},{h_z}/{2})\parallel_{\infty}}\right).
\end{align*}

\begin{table}[!tbp]
\tabcolsep=8pt
\caption{$L^\infty$-norm errors and CPU times (in seconds) for Example \ref{ex1} with $\alpha=1.2$.}\label{table11}
 \centering
  \begin{tabular}{lllllll}
    \hline
 $\Delta t$ & $h$ &  CPU & $\parallel U^N-u^N\parallel_\infty$ &  order$_1$ & $\parallel \widetilde{U}^N-u^N\parallel_\infty$  & order$_2$    \\
\hline
$1/16$  &$1/16$  &  0.02s  &  1.79E$-$08 &        & 5.45E$-$10&             \\
$1/32$  &$1/32$  &  0.07s  &  4.37E$-$09 & 2.03   & 3.41E$-$11& 4.00        \\
$1/64$  &$1/64$  &  0.23s  &  1.09E$-$09 & 2.01   & 4.19E$-$12& 3.02        \\
$1/128$ &$1/128$ &  0.98s  &  2.71E$-$10 & 2.00   & 3.27E$-$13& 3.68        \\
$1/256$ &$1/256$ &  7.44s  &  6.78E$-$11 & 2.00   & 2.50E$-$14& 3.71        \\
\hline
\end{tabular}
\end{table}

\begin{table}[!tbp]
\tabcolsep=8pt
\caption{$L^\infty$-norm errors and CPU times (in seconds) for Example \ref{ex1} with $\alpha=1.5$.}\label{table12}
 \centering
  \begin{tabular}{lllllll}
    \hline
 $\Delta t$ & $h$ &  CPU & $\parallel U^N-u^N\parallel_\infty$ &  order$_1$ & $\parallel \widetilde{U}^N-u^N\parallel_\infty$  & order$_2$    \\
\hline
$1/16$  &$1/16$  &  0.02s   &  1.48E$-$08 &        & 1.04E$-$09&             \\
$1/32$  &$1/32$  &  0.07s   &  3.51E$-$09 & 2.08   & 6.48E$-$11& 4.00        \\
$1/64$  &$1/64$  &  0.23s   &  8.66E$-$10 & 2.02   & 4.06E$-$12& 4.00        \\
$1/128$ &$1/128$ &  0.97s   &  2.16E$-$10 & 2.00   & 2.54E$-$13& 4.00        \\
$1/256$ &$1/256$ &  7.43s   &  5.39E$-$11 & 2.00   & 1.72E$-$14& 3.88        \\
\hline
\end{tabular}
\end{table}

\begin{table}[!tbp]
\tabcolsep=8pt
\caption{$L^\infty$-norm errors and CPU times (in seconds) for Example \ref{ex1} with $\alpha=1.8$.}\label{table13}
 \centering
  \begin{tabular}{lllllll}
    \hline
 $\Delta t$ & $h$ &  CPU & $\parallel U^N-u^N\parallel_\infty$ &  order$_1$ & $\parallel \widetilde{U}^N-u^N\parallel_\infty$  & order$_2$    \\
\hline
$1/16$  &$1/16$  &  0.02s  &  1.08E$-$08 &        & 1.96E$-$09&             \\
$1/32$  &$1/32$  &  0.06s  &  2.37E$-$09 & 2.19   & 1.17E$-$10& 4.06        \\
$1/64$  &$1/64$  &  0.23s  &  5.71E$-$10 & 2.05   & 7.03E$-$12& 4.06        \\
$1/128$ &$1/128$ &  0.97s  &  1.41E$-$10 & 2.01   & 4.30E$-$13& 4.03        \\
$1/256$ &$1/256$ &  7.43s  &  3.53E$-$11 & 2.00   & 2.69E$-$14& 4.00        \\
\hline
\end{tabular}
\end{table}

\begin{table}[!tbp]
\tabcolsep=8pt
\caption{$L^\infty$-norm errors and CPU times (in seconds) for Example \ref{ex1} with $\alpha=2.0$.}\label{table14}
 \centering
  \begin{tabular}{lllllll}
    \hline
 $\Delta t$ & $h$ &  CPU & $\parallel U^N-u^N\parallel_\infty$ &  order$_1$ & $\parallel \widetilde{U}^N-u^N\parallel_\infty$  & order$_2$    \\
\hline
$1/16$  &$1/16$  &   0.02s  &  7.94E$-$09 &        & 2.88E$-$09&             \\
$1/32$  &$1/32$  &   0.06s  &  1.56E$-$09 & 2.34   & 1.80E$-$10& 3.99        \\
$1/64$  &$1/64$  &   0.23s  &  3.78E$-$10 & 2.05   & 1.13E$-$11& 4.00        \\
$1/128$ &$1/128$ &   0.97s  &  9.39E$-$11 & 2.01   & 7.06E$-$13& 4.00        \\
$1/256$ &$1/256$ &   7.43s  &  2.35E$-$11 & 2.00   & 4.42E$-$14& 4.00        \\
\hline
\end{tabular}
\end{table}
\begin{table}[!tbp]
\tabcolsep=6pt
\caption{$L^\infty$-norm errors and CPU times (in seconds) for Example \ref{ex1} with  $\alpha=1.5$ using unequal meshsizes in $x$ and $y$ directions.}\label{table15}
 \centering
  \begin{tabular}{llllllll}
    \hline
 $\Delta t$ & $h_x$ & $h_y$ &  CPU & $\parallel U^N-u^N\parallel_\infty$ &  order$_1$ & $\parallel \widetilde{U}^N-u^N\parallel_\infty$  & order$_2$    \\
\hline
$1/16$  &$1/16$  &$1/32$ &  0.03s   &  1.43E$-$08 &        & 9.83E$-$09&             \\
$1/32$  &$1/32$  &$1/64$ &  0.10s   &  3.48E$-$09 & 2.04   & 5.89E$-$11& 4.06        \\
$1/64$  &$1/64$  &$1/128$&  0.29s   &  8.64E$-$10 & 2.01   & 3.63E$-$12& 4.02        \\
$1/128$ &$1/128$ &$1/256$&  1.94s   &  2.16E$-$10 & 2.00   & 2.22E$-$13& 4.03        \\
$1/256$ &$1/256$ &$1/512$&  17.0s   &  5.39E$-$11 & 2.00   & 1.72E$-$14& 3.69        \\
\hline
\end{tabular}
\end{table}

Table~\ref{table11}-Table~\ref{table14} list the errors and the corresponding convergence orders for $\alpha= 1.2, 1.5, 1.8, 2$ in the $L^\infty$-norm using the same spatial meshsize $h=h_x=h_y=h_z$ while Table~\ref{table15} lists the errors and the corresponding convergence orders for $\alpha=1.5$ in the $L^\infty$-norm using different spatial meshsizes. As we can see that these results confirm second-order accuracy in time variable and fourth-order accuracy in space variable if the Richardson extrapolation (\ref{Richardson}) is not applied. But the results is fourth-order accurate both in time and  space variables if the Richardson extrapolation (\ref{Richardson}) is applied. Additionally, the computational time in seconds is also provided in Table~\ref{table11}-Table~\ref{table15}, as we can see that the computational time for $\Delta t=h=\frac{1}{256}$ is less than 10 seconds, and the computational time for $\Delta t=h_x=\frac{1}{256},h_y=\frac{1}{512}$ is less than 20 seconds. And the method is extremely accurate, the error between the extrapolated solution and exact solution is in the order of $10^{-14}$ when $\Delta t=h=\frac{1}{256}$ and $\Delta t=h_x=\frac{1}{256},h_y=\frac{1}{512}$, which is nearly the machine accuracy.

To show the unconditional stability, we fix $h$ and vary $\Delta t$, results for $\alpha=1.2$ and $\alpha=1.8$ are plotted in Figure~\ref{Fig1a}. As one can see that these results clearly show that the time step is not related to the spatial meshsize, and as the spatial meshsize goes to zero, the dominant error comes from the temporal part.

\begin{figure}[!tbp]
\centering
\subfigure[$\alpha=1.2$]{
\includegraphics[width=0.45\textwidth]{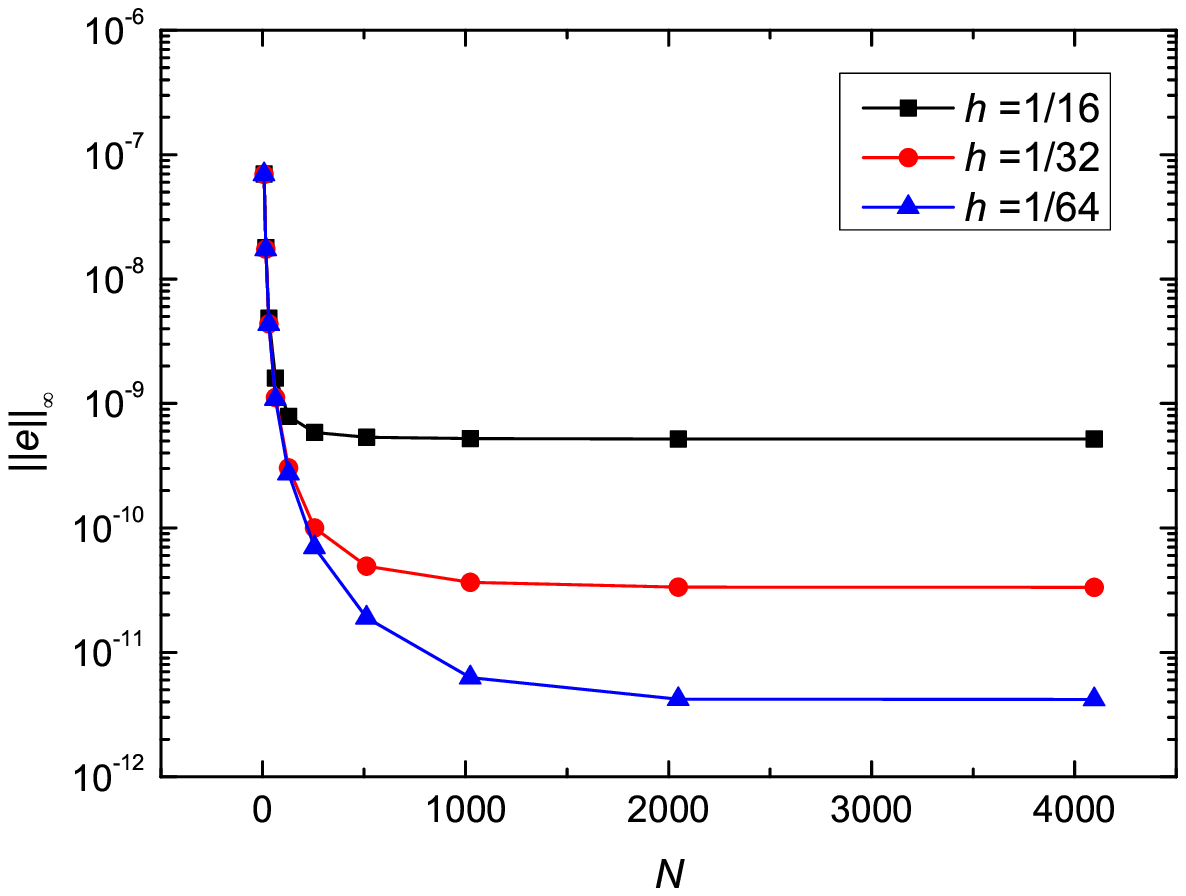}}
\subfigure[$\alpha=1.8$]{
\includegraphics[width=0.45\textwidth]{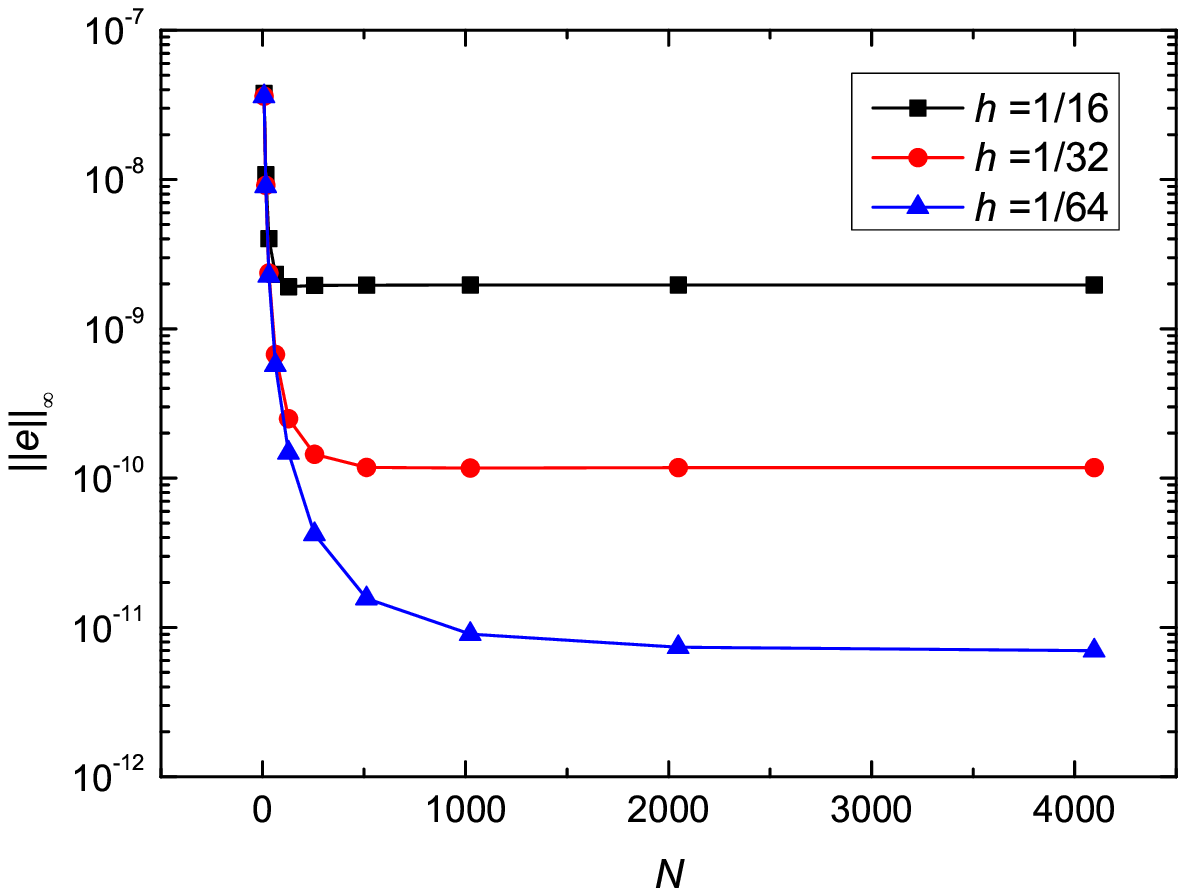}} \\
\caption{Numerical results  for Example \ref{ex1} with fixed $h$ but varying $\Delta t$.}
\label{Fig1a}
\end{figure}


\begin{example}\label{ex2}
In this example, we consider the 3D  space fractional Allen-Cahn equation with the exact solution
\begin{align}
u(x,y,t)=e^{-t}x^4(1-x)^4y^4(1-y)^4z^4(1-z)^4.
\end{align}
The source term and initial condition are given according to this exact solution,  in addition,  $\varepsilon$ is set to be 0.1.
\end{example}

Again, we carry out numerical accuracy test for $1<\alpha\leq2$. Table~\ref{table21}-Table~\ref{table24} list the errors and the corresponding convergence orders for $\alpha = 1.2, 1.5, 1.8, 2$ in the $L^\infty$-norm using the same spatial meshsize $h=h_x=h_y=h_z$ while Table~\ref{table15} lists the errors and the corresponding convergence orders for $\alpha=1.5$ in the $L^\infty$-norm using different spatial meshsizes. As we can see that these results confirm second-order accuracy in time variable and fourth-order accuracy in space variable if the Richardson extrapolation (\ref{Richardson}) is not applied. But the results is fourth-order accurate both in time and  space variables if the Richardson extrapolation (\ref{Richardson}) is applied. Additionally, the computational time in seconds is also provided in Table~\ref{table21}-Table~\ref{table25}, as we can see that the computational time for $\Delta t=h=\frac{1}{128}$ is less than 120 seconds and the computational time for $\Delta t=h_x=\frac{1}{128},h_y=\frac{1}{160}, h_x=\frac{1}{256}$ is less than $300$ seconds. And the method is extremely accurate, the error between the extrapolated solution and exact solution is in the order of $10^{-15}$ when $\Delta t=h=\frac{1}{128}$ and in the order of $10^{-16}$ when $\Delta t=h_x=\frac{1}{128},h_y=\frac{1}{160}, h_x=\frac{1}{256}$, which are both nearly the machine accuracy.

To show the unconditional stability, we fix $h$ and vary $\Delta t$, results for $\alpha=1.2$ and $\alpha=1.8$ are plotted in Figure~\ref{Fig2a}. As one can see that these results clearly show that the time step is not related to the spatial meshsize, and as the spatial meshsize goes to zero, the dominant error comes from the temporal part.

\begin{table}[!tbp]
\tabcolsep=8pt
\caption{$L^\infty$-norm errors and CPU times (in seconds) for Example \ref{ex2} with $\alpha=1.2$.}\label{table21}
 \centering
  \begin{tabular}{lllllll}
    \hline
 $\Delta t$ & $h$ &  CPU & $\parallel U^N-u^N\parallel_\infty$ &  order$_1$ & $\parallel \widetilde{U}^N-u^N\parallel_\infty$  & order$_2$    \\
\hline
$1/8$   &$1/8$   &  0.01s  &  2.79E$-$10 &        & 4.82E$-$11&             \\
$1/16$  &$1/16$  &  0.07s  &  6.10E$-$11 & 2.19   & 2.99E$-$12& 4.01        \\
$1/32$  &$1/32$  &  0.42s  &  1.47E$-$11 & 2.05   & 1.87E$-$13& 4.00        \\
$1/64$  &$1/64$  &  6.28s  &  3.65E$-$12 & 2.01   & 1.58E$-$14& 3.56        \\
$1/128$ &$1/128$ &  110s   &  9.09E$-$13 & 2.00   & 1.24E$-$15& 3.68        \\
\hline
\end{tabular}
\end{table}

\begin{table}[!tbp]
\tabcolsep=8pt
\caption{$L^\infty$-norm errors and CPU times (in seconds) for Example \ref{ex2} with $\alpha=1.5$.}\label{table22}
 \centering
  \begin{tabular}{lllllll}
    \hline
 $\Delta t$ & $h$ &  CPU & $\parallel U^N-u^N\parallel_\infty$ &  order$_1$ & $\parallel \widetilde{U}^N-u^N\parallel_\infty$  & order$_2$    \\
\hline
$1/8$   &$1/8$   &  0.01s  &  2.53E$-$10 &        & 9.13E$-$11&             \\
$1/16$  &$1/16$  &  0.07s  &  4.61E$-$11 & 2.46   & 5.61E$-$12& 4.02        \\
$1/32$  &$1/32$  &  0.42s  &  1.05E$-$11 & 2.14   & 3.50E$-$13& 4.00        \\
$1/64$  &$1/64$  &  6.27s  &  2.56E$-$12 & 2.04   & 2.19E$-$14& 4.00        \\
$1/128$ &$1/128$ &  110s   &  6.35E$-$13 & 2.01   & 1.37E$-$15& 4.00        \\
\hline
\end{tabular}
\end{table}

\begin{table}[!tbp]
\tabcolsep=8pt
\caption{$L^\infty$-norm errors and CPU times (in seconds) for Example \ref{ex2} with $\alpha=1.8$.}\label{table23}
 \centering
  \begin{tabular}{lllllll}
    \hline
 $\Delta t$ & $h$ &  CPU & $\parallel U^N-u^N\parallel_\infty$ &  order$_1$ & $\parallel \widetilde{U}^N-u^N\parallel_\infty$  & order$_2$    \\
\hline
$1/8$   &$1/8$   &  0.01s  &  2.31E$-$10 &        & 1.51E$-$10&             \\
$1/16$  &$1/16$  &  0.07s  &  2.90E$-$11 & 2.99   & 9.19E$-$12& 4.04        \\
$1/32$  &$1/32$  &  0.42s  &  5.53E$-$12 & 2.39   & 5.71E$-$13& 4.01        \\
$1/64$  &$1/64$  &  6.26s  &  1.28E$-$12 & 2.12   & 3.57E$-$14& 4.00        \\
$1/128$ &$1/128$ &  110s   &  3.12E$-$13 & 2.03   & 2.23E$-$15& 4.00        \\
\hline
\end{tabular}
\end{table}

\begin{table}[!tbp]
\tabcolsep=8pt
\caption{$L^\infty$-norm errors and CPU times (in seconds) for Example \ref{ex2} with $\alpha=2.0$.}\label{table24}
 \centering
  \begin{tabular}{lllllll}
    \hline
 $\Delta t$ & $h$ &  CPU & $\parallel U^N-u^N\parallel_\infty$ &  order$_1$ & $\parallel \widetilde{U}^N-u^N\parallel_\infty$  & order$_2$    \\
\hline
$1/8$   &$1/8$   &  0.01s  &  2.22E$-$10 &        & 1.91E$-$10 &            \\
$1/16$  &$1/16$  &  0.06s  &  1.92E$-$11 & 3.53   & 1.15E$-$11 & 4.05       \\
$1/32$  &$1/32$  &  0.42s  &  2.93E$-$12 & 2.71   & 7.13E$-$13 & 4.01       \\
$1/64$  &$1/64$  &  6.23s  &  7.17E$-$13 & 2.03   & 4.44E$-$14 & 4.00       \\
$1/128$ &$1/128$ &  109s  &  1.79E$-$13 & 2.00   & 2.78E$-$15 & 4.00       \\
\hline
\end{tabular}
\end{table}

\begin{table}[!tbp]
\tabcolsep=5pt
\caption{$L^\infty$-norm errors and CPU times (in seconds) for Example \ref{ex2} with  $\alpha=1.5$ using unequal meshsizes in $x$, $y$ and $z$ directions.}\label{table25}
 \centering
  \begin{tabular}{lllllllll}
    \hline
 $\Delta t$ & $h_x$ & $h_y$ & $h_z$ &  CPU & $\parallel U^N-u^N\parallel_\infty$ &  order$_1$ & $\parallel \widetilde{U}^N-u^N\parallel_\infty$  & order$_2$    \\
\hline
$1/8$   &$1/8$   &$1/10$ &$1/16$ &  0.02s   &  2.06E$-$10 &        & 5.94E$-$11&             \\
$1/16$  &$1/16$  &$1/20$ &$1/32$ &  0.10s   &  4.33E$-$11 & 2.25   & 3.53E$-$12& 4.07        \\
$1/32$  &$1/32$  &$1/40$ &$1/64$ &  0.96s   &  1.03E$-$11 & 2.07   & 2.11E$-$13& 4.06        \\
$1/64$  &$1/64$  &$1/80$ &$1/128$&  17.2s   &  2.54E$-$12 & 2.02   & 1.30E$-$14& 4.03        \\
$1/128$ &$1/128$ &$1/160$&$1/256$&  280.s   &  6.34E$-$13 & 2.00   & 8.29E$-$16& 3.97        \\
\hline
\end{tabular}
\end{table}

\begin{figure}[!tbp]
\centering
\subfigure[$\alpha=1.2$]{
\includegraphics[width=0.45\textwidth]{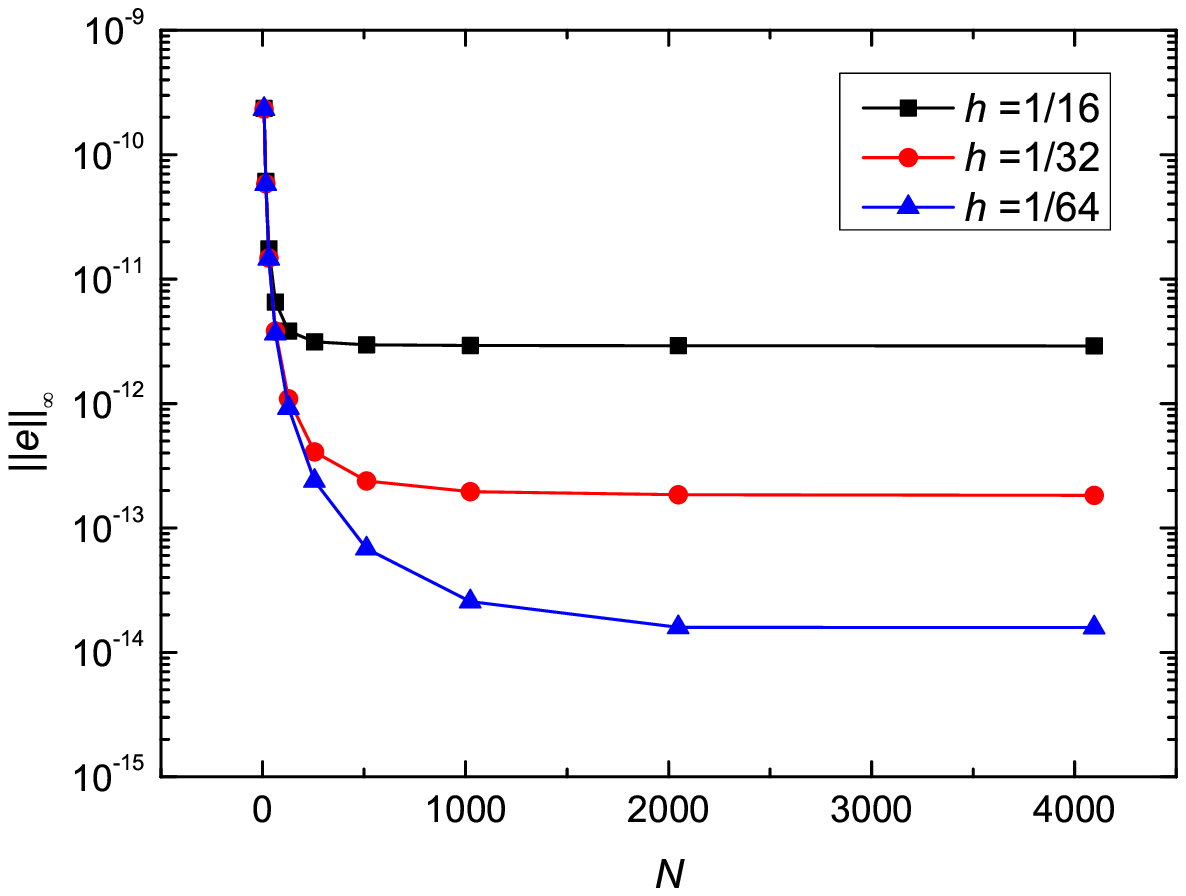}}
\subfigure[$\alpha=1.8$]{
\includegraphics[width=0.45\textwidth]{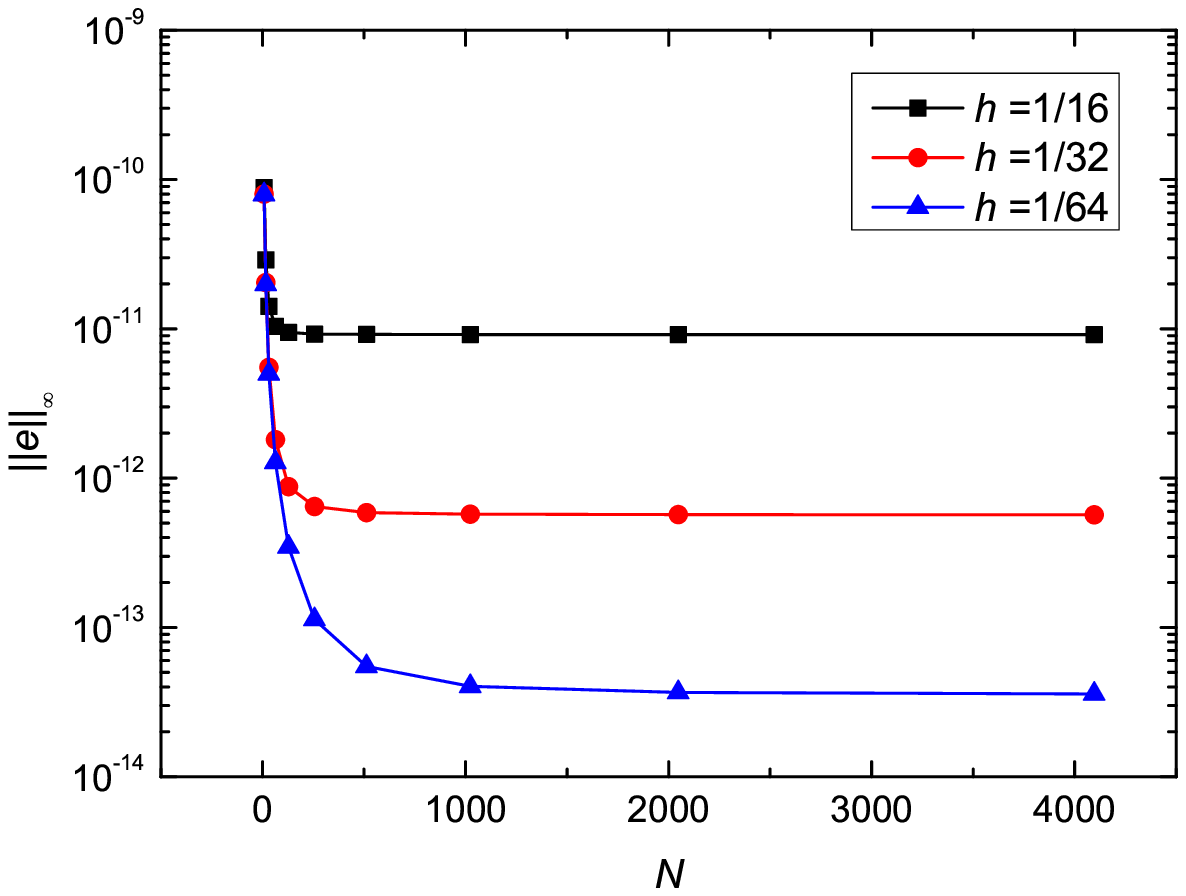}} \\
\caption{Numerical results  for Example \ref{ex2} with fixed $h$ but varying $\Delta t$.}
\label{Fig2a}
\end{figure}

\subsection{Numerical tests for discrete maximum principle}

\begin{example}\label{ex3}
In this example, we consider the 2D  space fractional Allen-Cahn equation with initial condition
\begin{align}
u_0(x,y)=0.95\times rand(x,y)+0.05,
\end{align}
where zero boundary values are set for the initial condition $u_0(x,y)$. Moreover,  $\alpha$ is set to be $1.7$.
\end{example}

For this example, we fix $h=0.05$ bur vary $\varepsilon$ and $\Delta t$. For $\varepsilon=0.1$, the  maximum principle condition (\ref{condition}) requires $0.1508\leq \Delta t\leq 0.4358$. The top four sub-figures in Figure~\ref{Fig2} show that the maximum values of the numerical solutions are bounded by $1$ if $\Delta t=0.01$, $\Delta t=0.4$, and $\Delta t=3$ but exceed 1  if $\Delta t=4$.  For $\varepsilon=0.2$, the  maximum principle condition (\ref{condition}) requires $0.0377\leq \Delta t\leq 0.1089$. The lower four sub-figures in Figure~\ref{Fig2} show the maximum values of the numerical solutions are bounded by $1$ if $\Delta t=0.001$, $\Delta t=0.1$, and $\Delta t=0.8$ but exceed 1  if $\Delta t=1.2$. These numerical results suggest that the constraint (\ref{condition}) for time step size to achieve the discrete maximum principle is only a sufficient condition. In practice, the maximum principle is still valid if a time step size with much smaller values or larger values is adopted.

\begin{figure}[!tbp]
\centering
{
\includegraphics[width=0.23\textwidth]{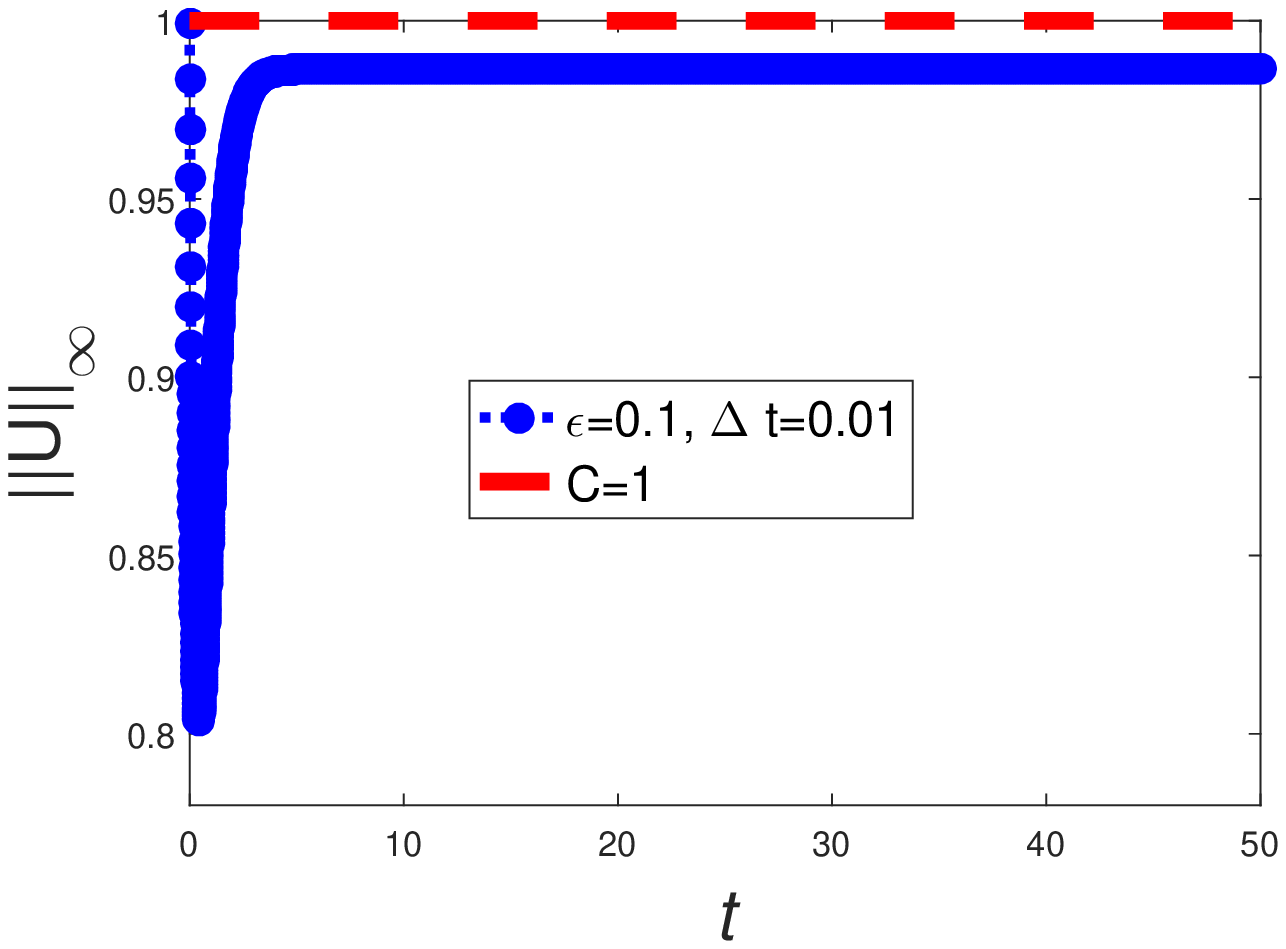}}
{
\includegraphics[width=0.23\textwidth]{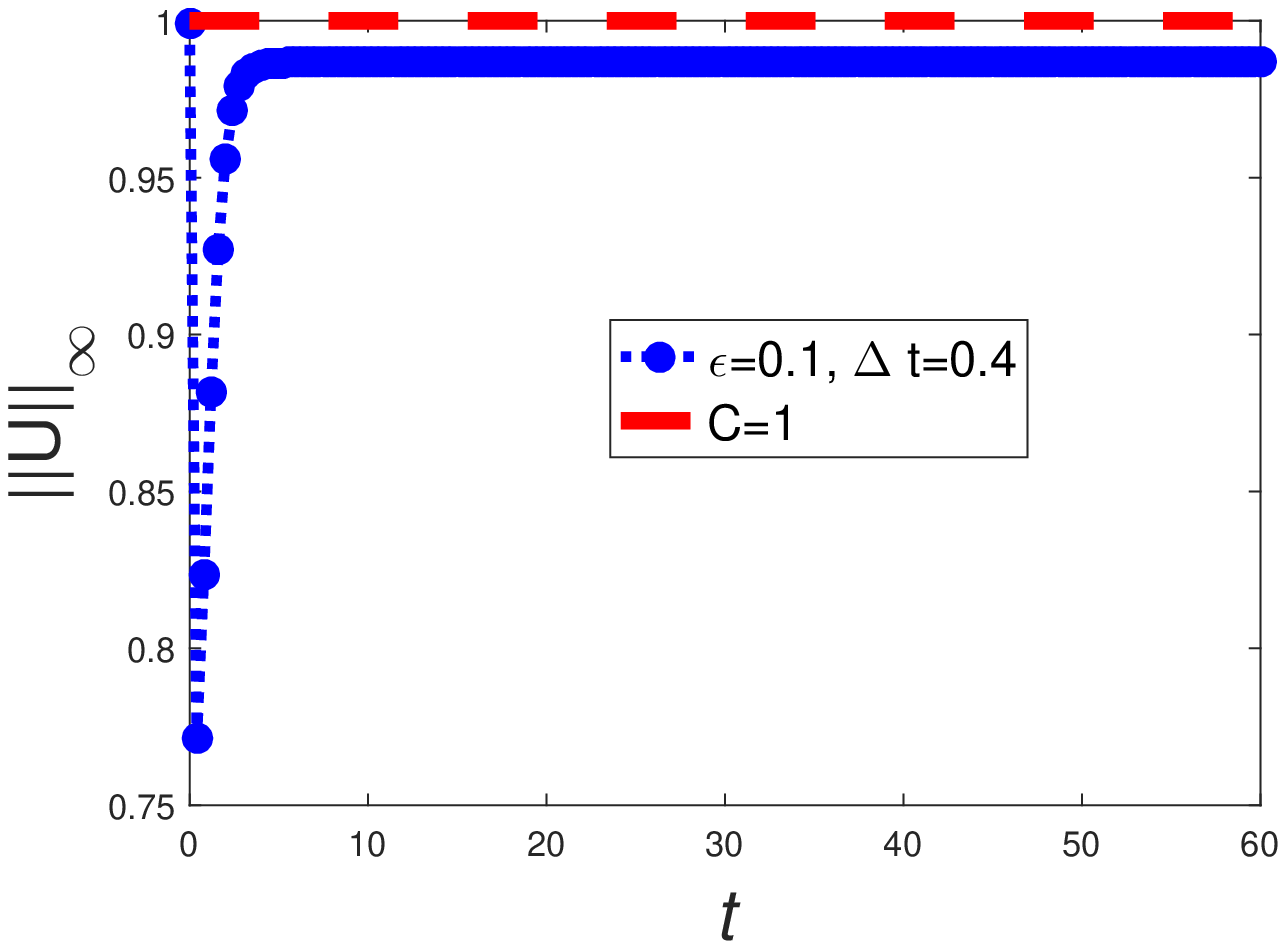}}
{
\includegraphics[width=0.23\textwidth]{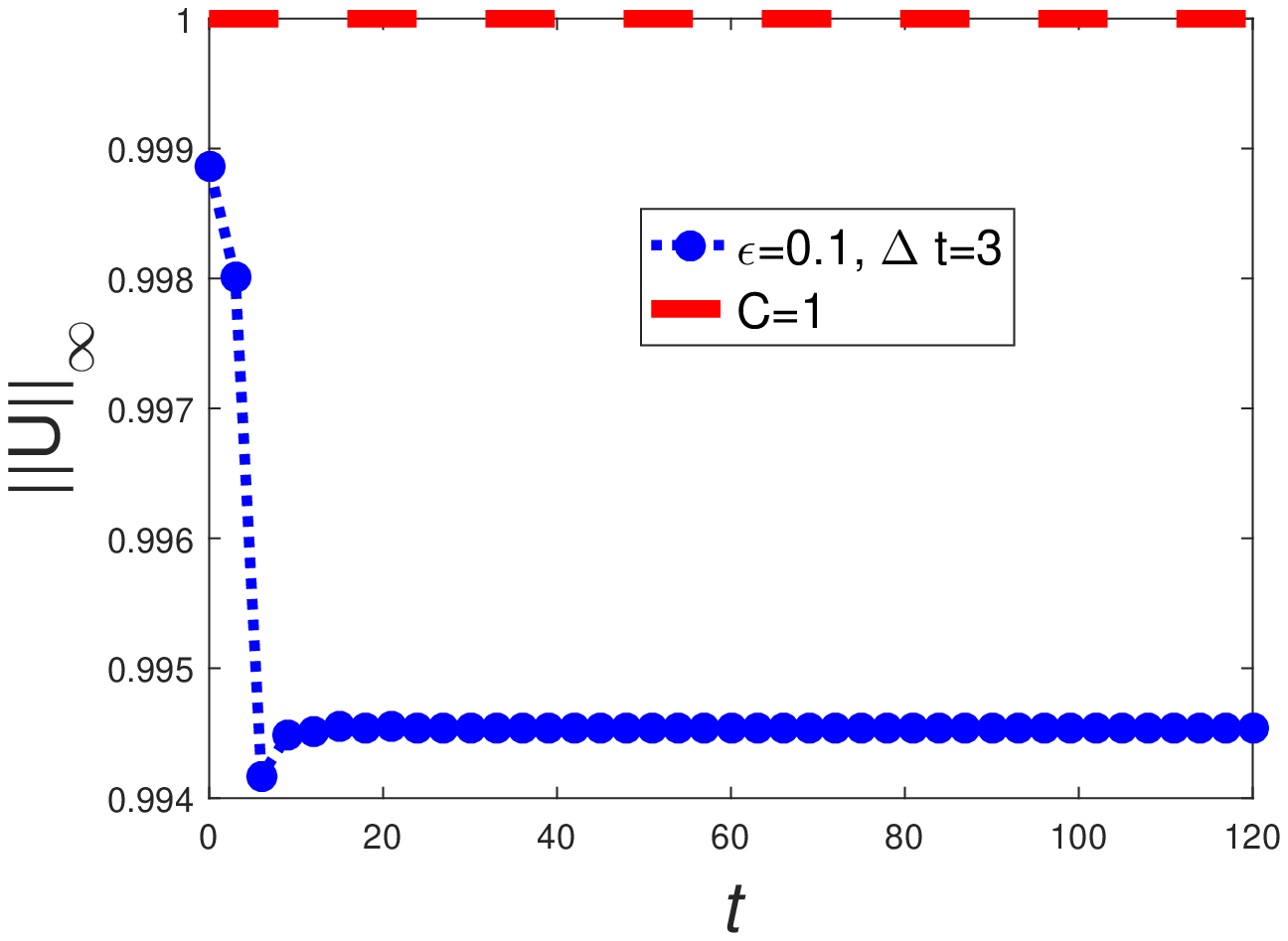}}
{
\includegraphics[width=0.23\textwidth]{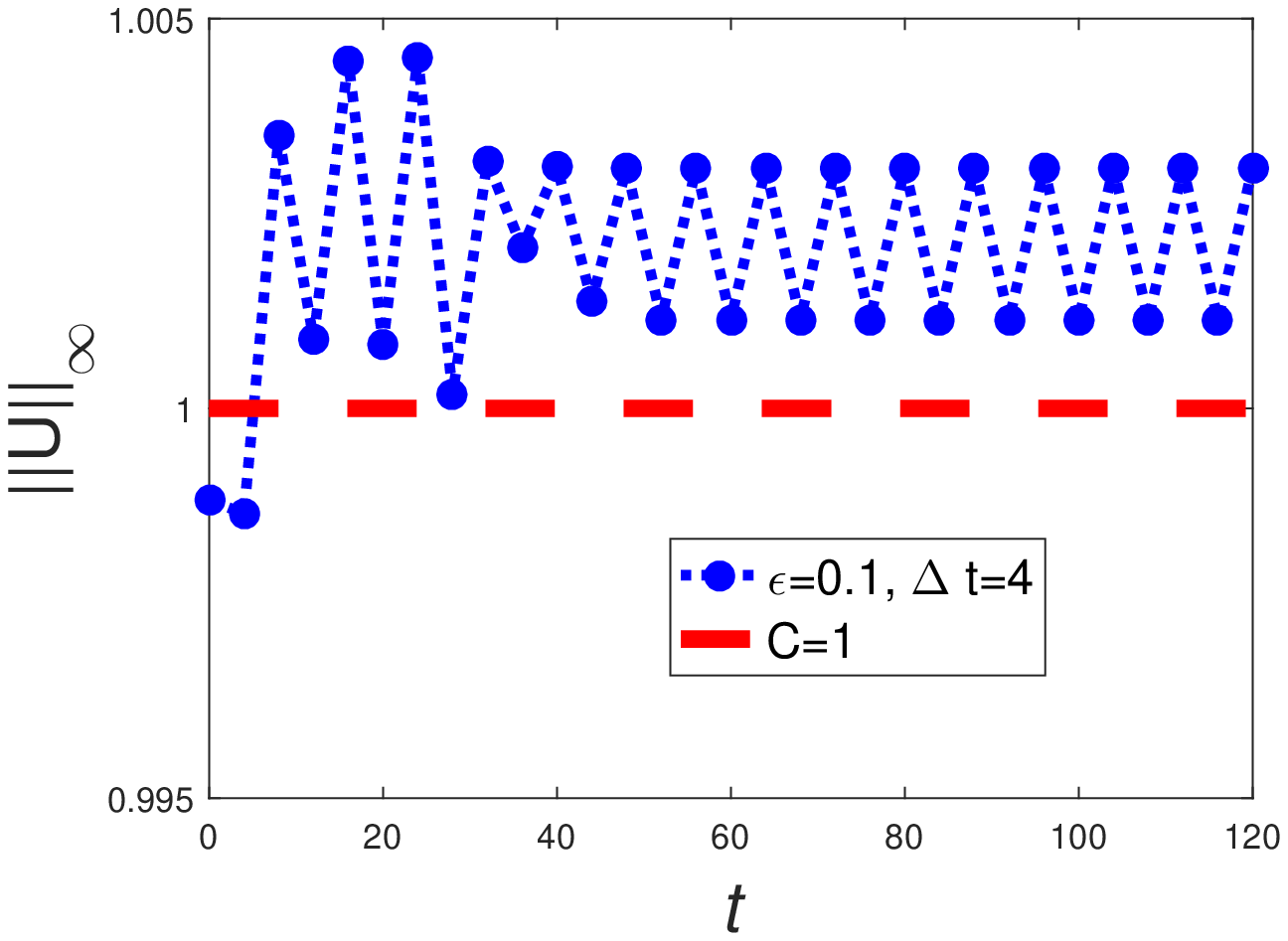}}\\
{
\includegraphics[width=0.23\textwidth]{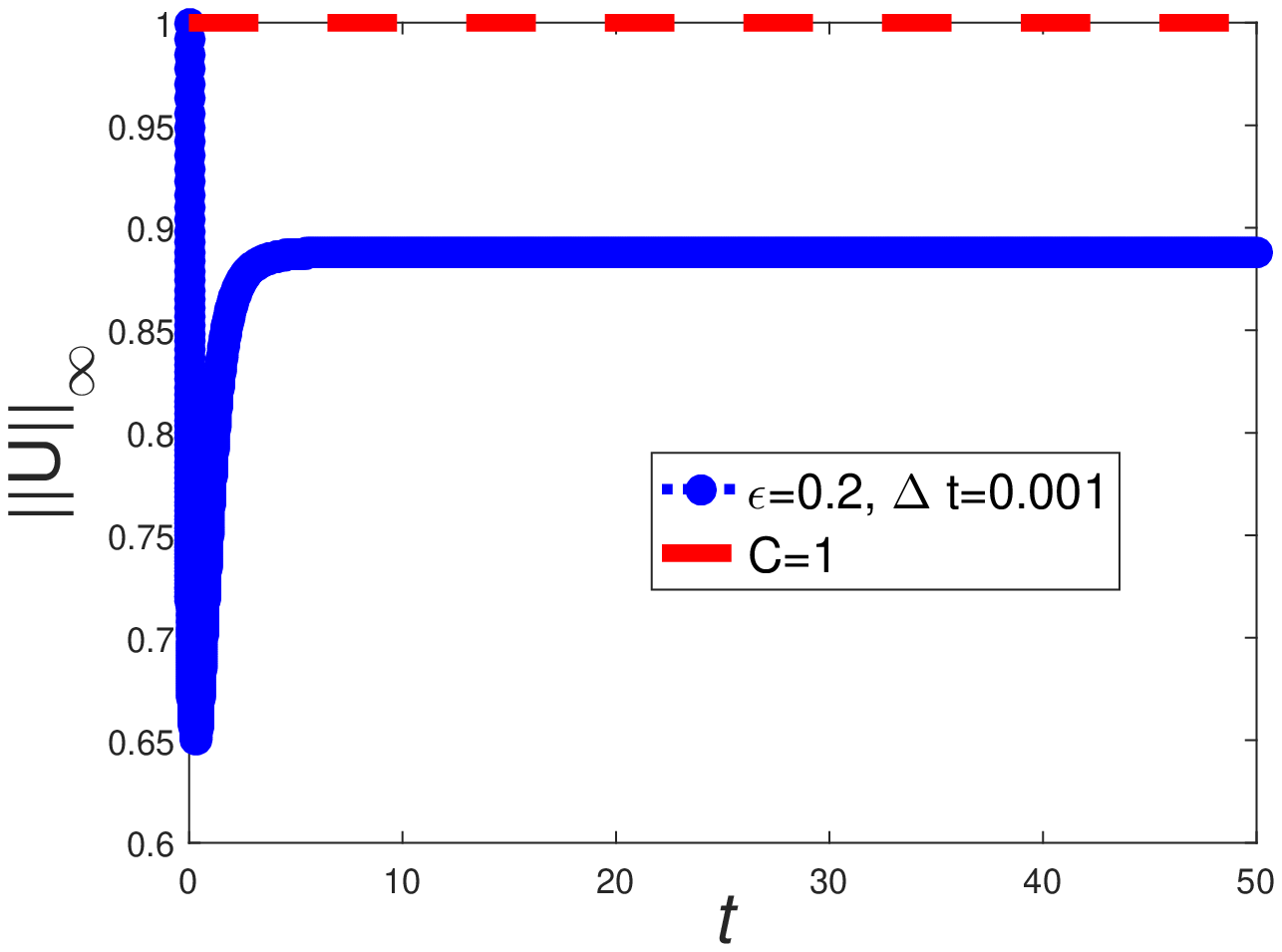}}
{
\includegraphics[width=0.23\textwidth]{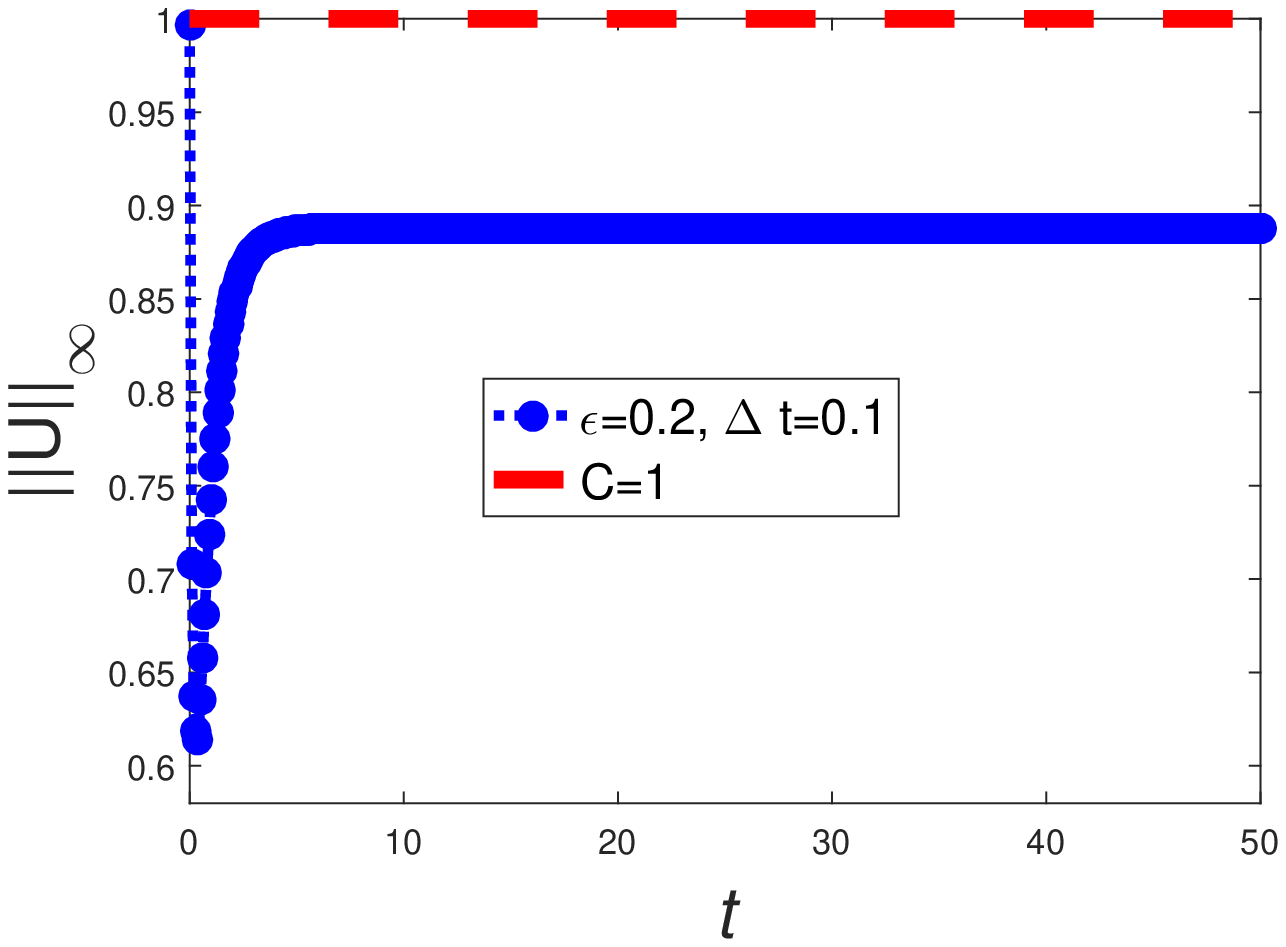}}
{
\includegraphics[width=0.23\textwidth]{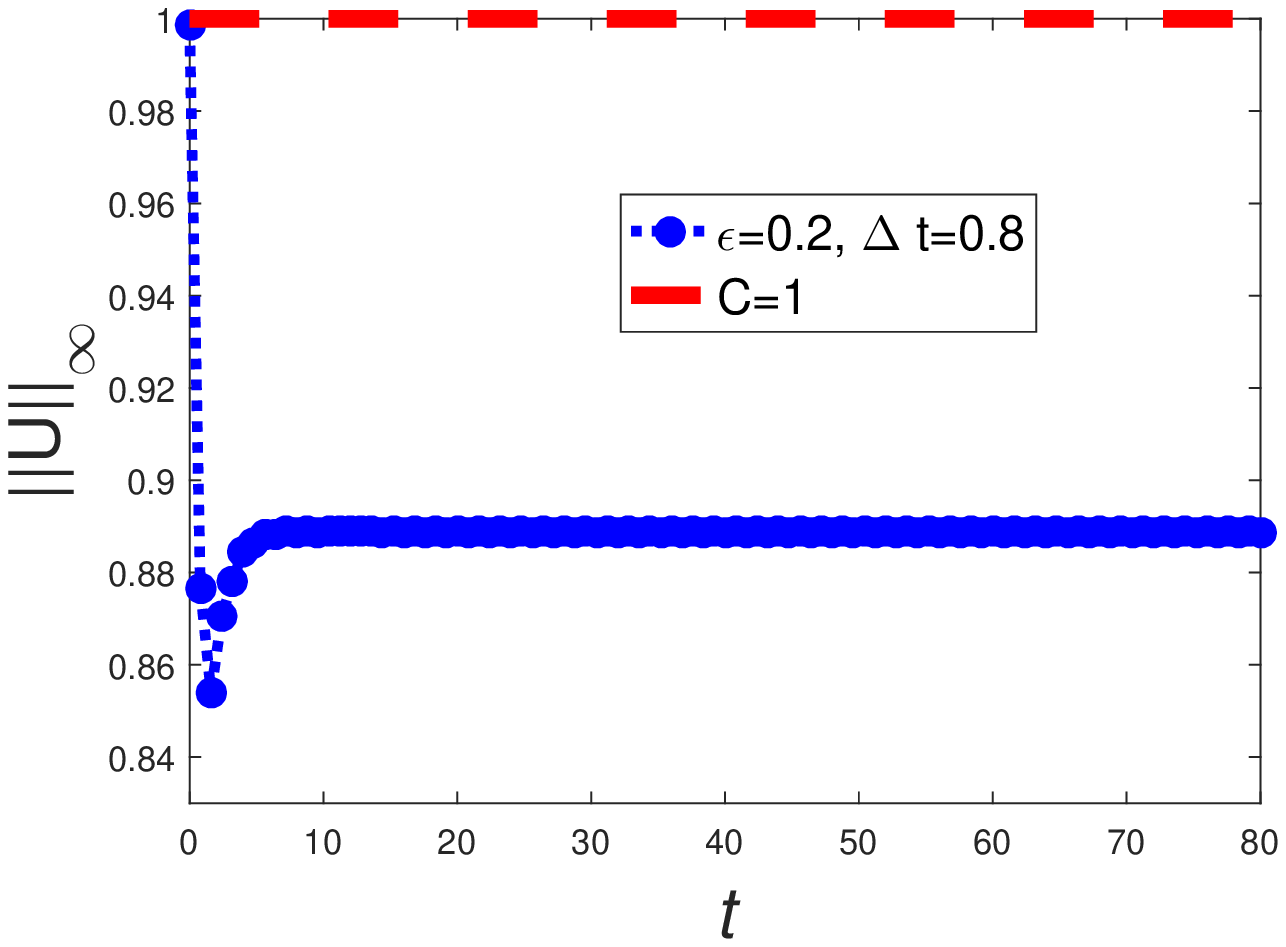}}
{
\includegraphics[width=0.23\textwidth]{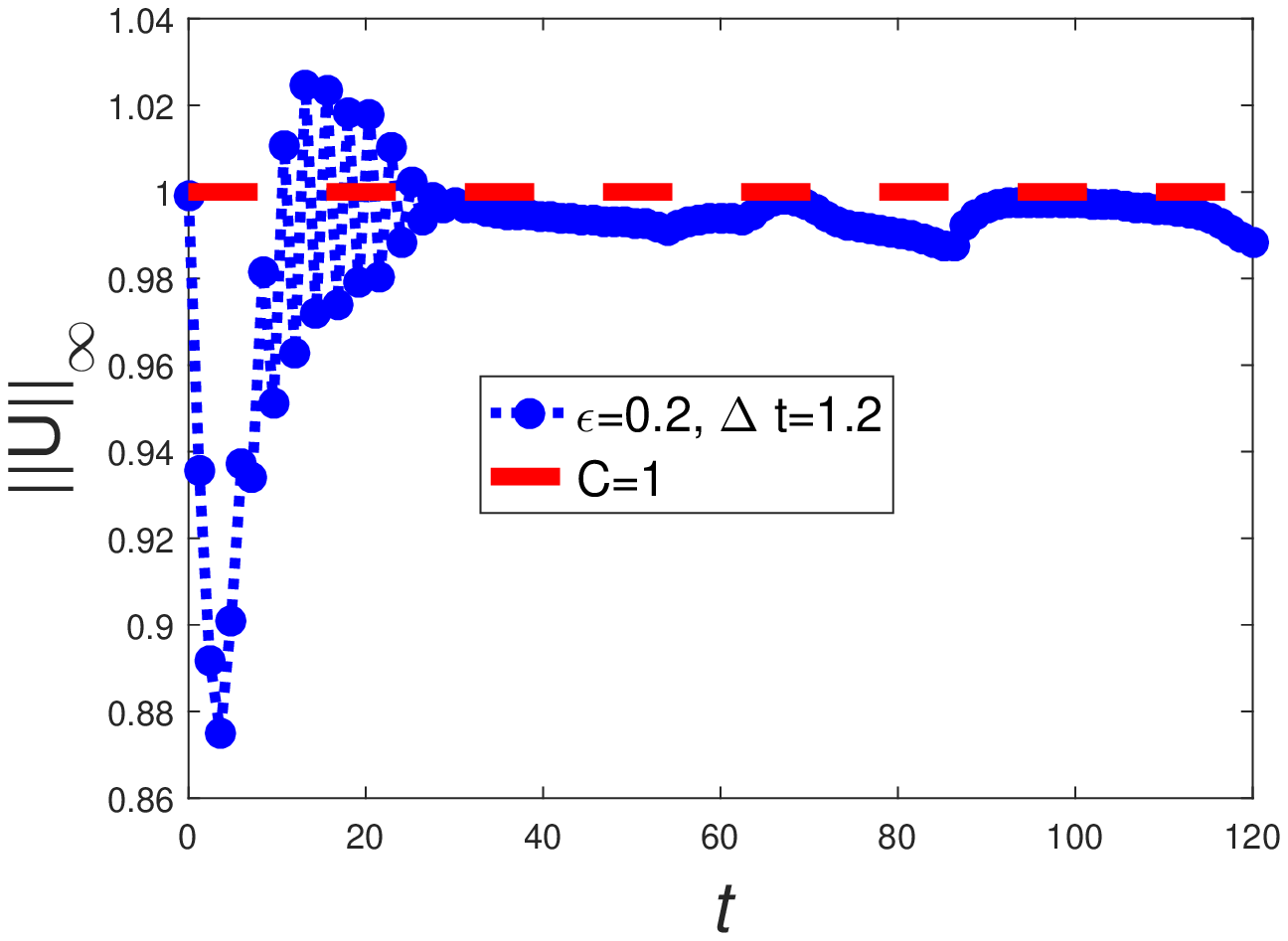}} \\
\caption{Numerical results with $\alpha=1.6$ and $h=0.05$: the maximum values of solution with different $\Delta t$ and $\varepsilon$.}
\label{Fig2}
\end{figure}

\begin{example}\label{ex4}
In this example, we consider the 2D  space fractional Allen-Cahn equation with initial condition
\begin{align}
u_0(x,y)=0.1\times rand(x,y)-0.05,
\end{align}
where zero boundary values are set for the initial condition $u_0(x,y)$.
\end{example}

\begin{figure}[!tbp]
\centering
{\includegraphics[width=0.32\textwidth]{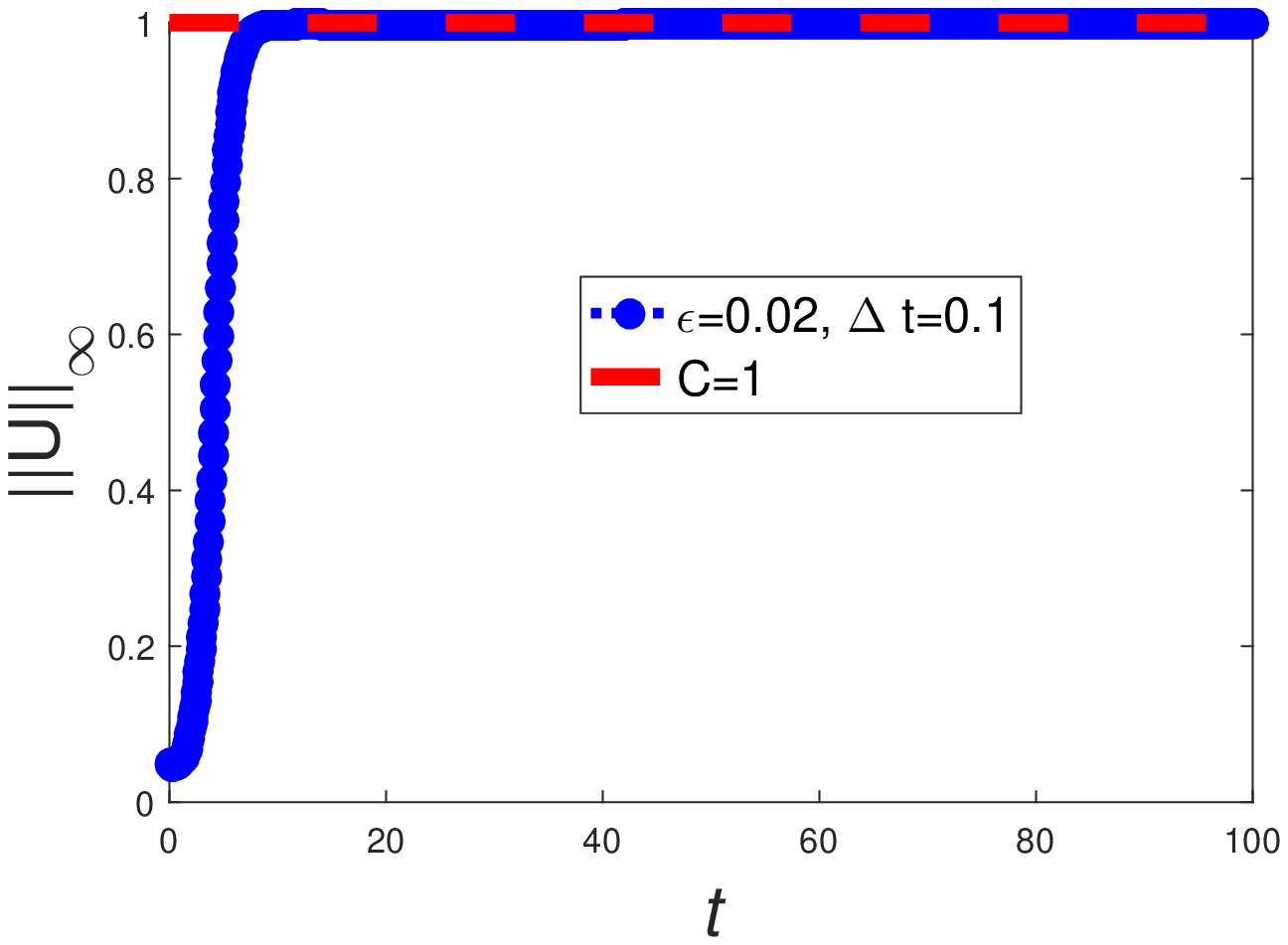}}
{\includegraphics[width=0.32\textwidth]{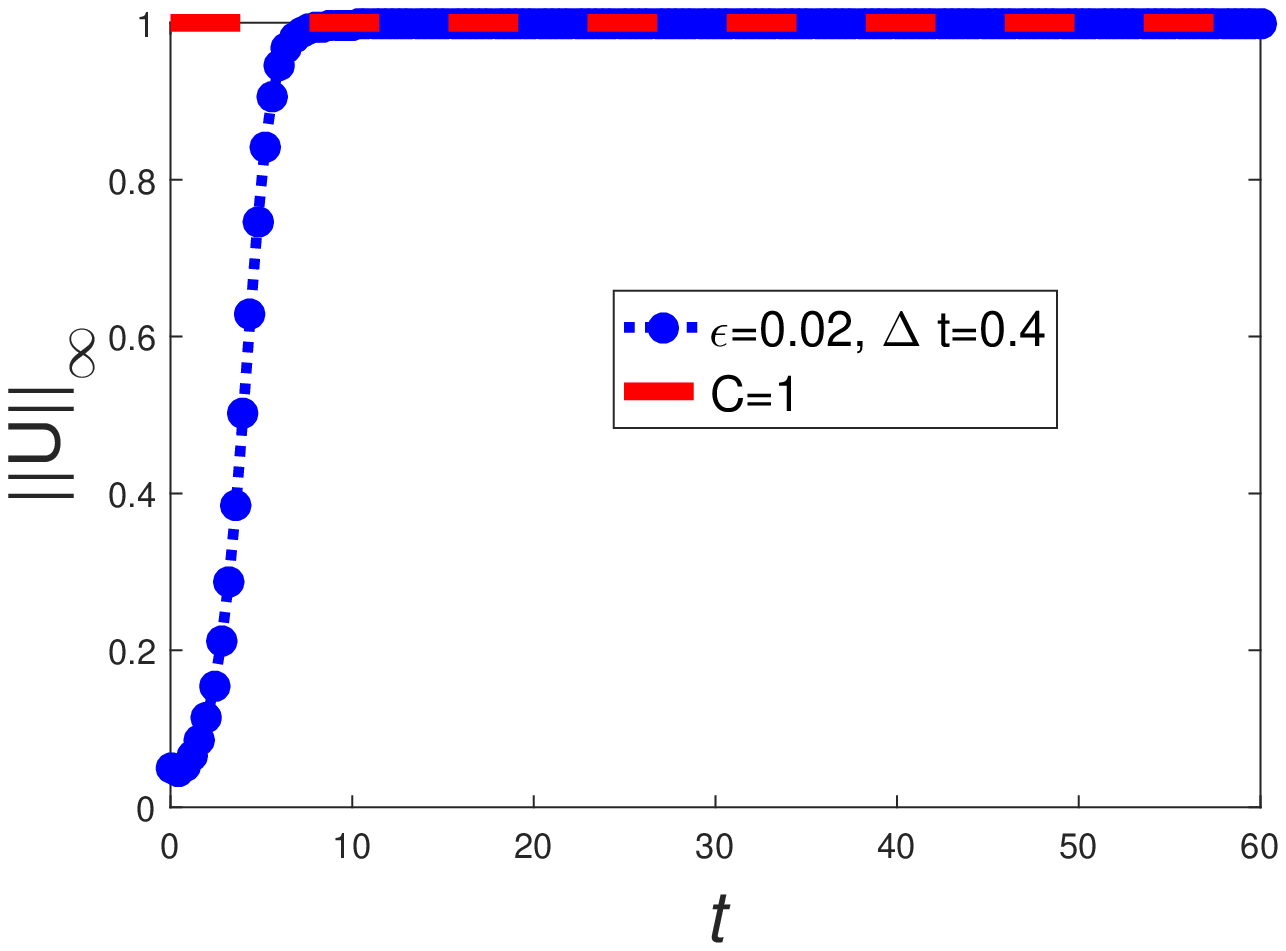}}
{\includegraphics[width=0.32\textwidth]{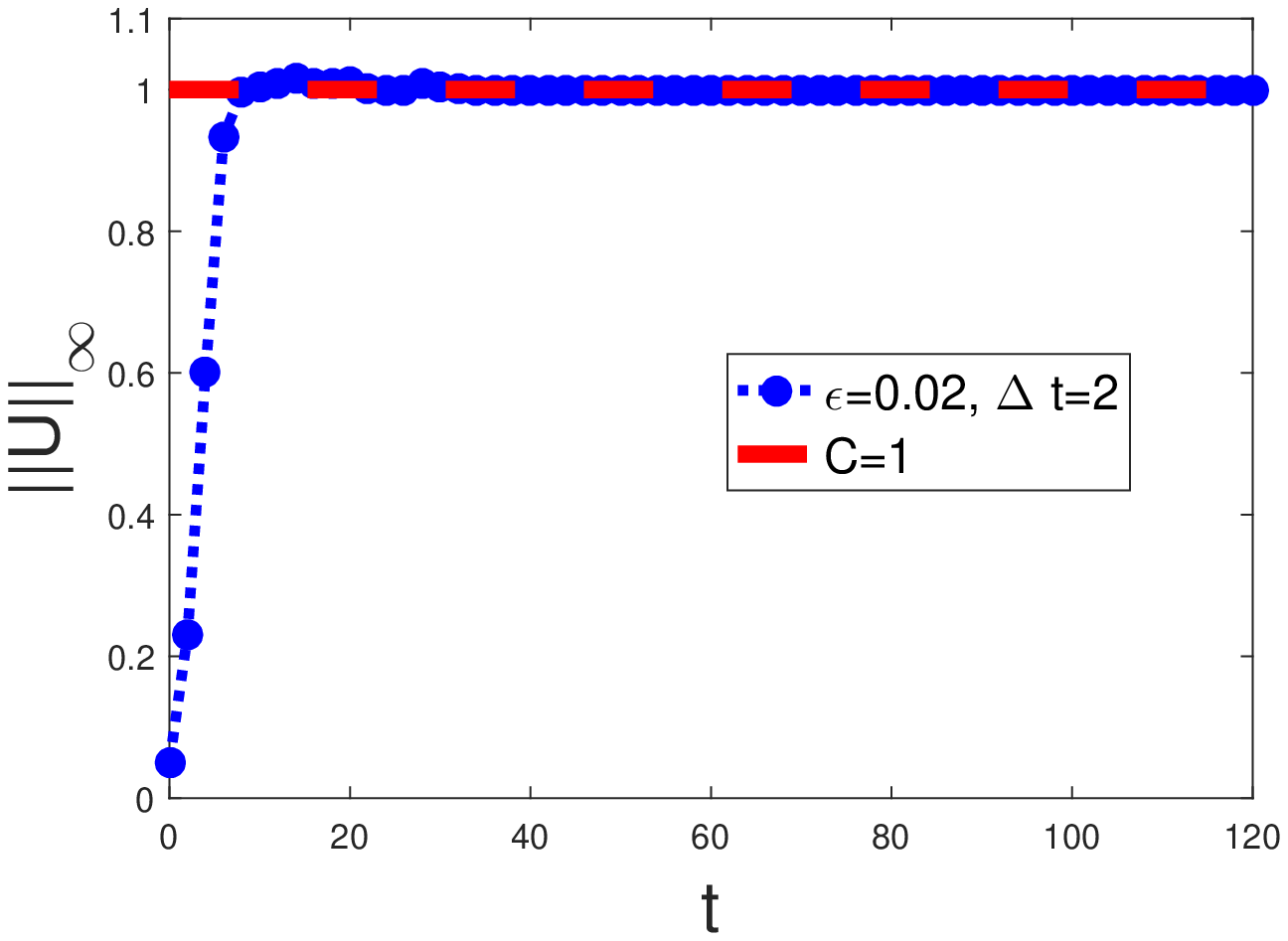}}
\caption{Numerical results with $\alpha=1.7, \varepsilon=0.02, h=0.01$: the maximum values of solution with different $\Delta t$.}
\label{Fig3}
\end{figure}

In this example, we first fix $h=0.01, \alpha=1.7$ and $\varepsilon=0.02$ but vary $\Delta t$. The maximum principle condition (\ref{condition}) requires $0.1776\leq\Delta t \leq 0.4945$.  Figure~\ref{Fig3} shows the maximum values of the numerical solutions are bounded by $1$ when $\Delta t=0.01$ and $\Delta t=0.4$. However, the maximum value exceeds 1 when $\Delta t$ increases to $2$.

Now we investigate the effects of fractional diffusion on phase separation and coarsening process. We set $h=0.01, \varepsilon=0.02, \Delta t=0.5$ and $\alpha=1.2, 1.5, 1.8$. Starting from random  initial values, the snapshots of the contours for the numerical solutions at $t=5,20,40,80$ are shown in Figure~\ref{Fig4}.  We see that reducing the fractional order yields to a thinner interfaces that allows smaller bulk regions and a much more heterogeneous phase structure. Moreover, it becomes slower for the phase coarsen process when the fractional order becomes smaller.

\begin{figure}[!tbp]
\centering
\includegraphics[scale=1]{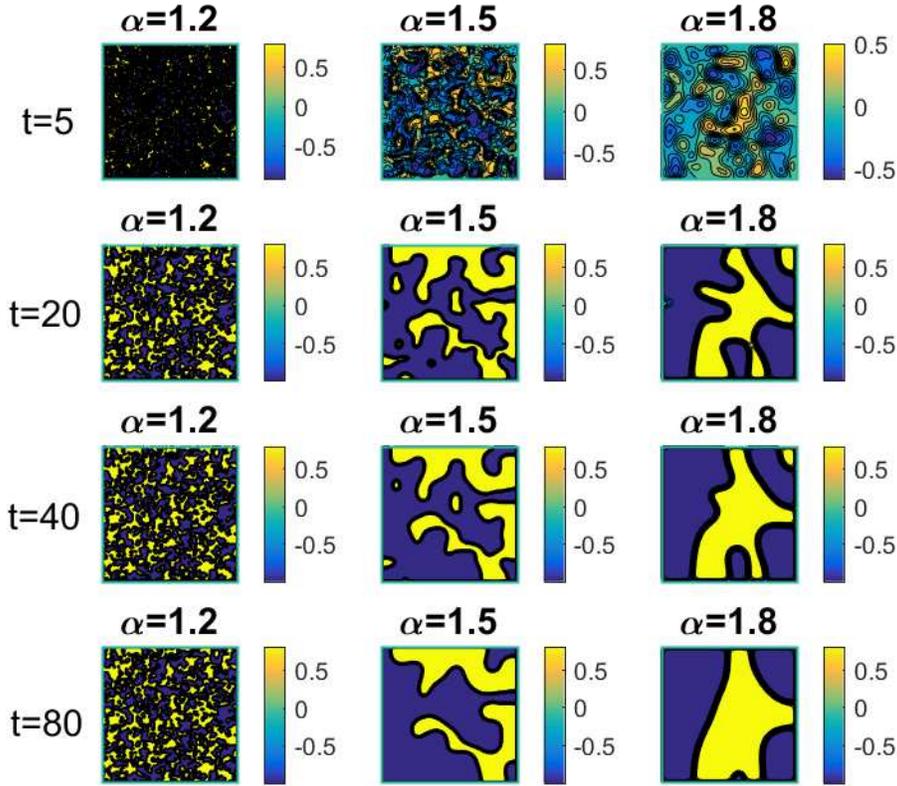}
\caption{Numerical dynamics (contour plots) with different fractional derivatives:  $\alpha=1.2, 1.5, 1.8$, where  $h=0.01, \varepsilon=0.02, \Delta t=0.5$.}
\label{Fig4}
\end{figure}

\begin{example}\label{ex5}
In this example, we consider the 3D  space fractional Allen-Cahn equation with exact solution
\begin{align}
u_0(x,y,z)=0.1\times rand(x,y,z)-0.05,
\end{align}
where zero boundary values  are set for the initial condition $u_0(x,y,z)$.
\end{example}

Again in this example, we first fix $h=0.01, \alpha=1.7, \varepsilon=0.02$ but vary $\Delta t$. The maximum principle condition (\ref{condition}) also gives $0.1776\leq\Delta t \leq 0.4945$ since the condition (\ref{condition}) does not rely on the dimension of the problem.  Same as the 2D case, Figure~\ref{Fig5} shows the maximum values of the numerical solutions are bounded by $1$ when $\Delta t=0.1$ and $\Delta t=0.4$. However, discrete maximum principle is invalid when $\Delta t$ increases to $2$.

\begin{figure}[!tbp]
\centering
{\includegraphics[width=0.32\textwidth]{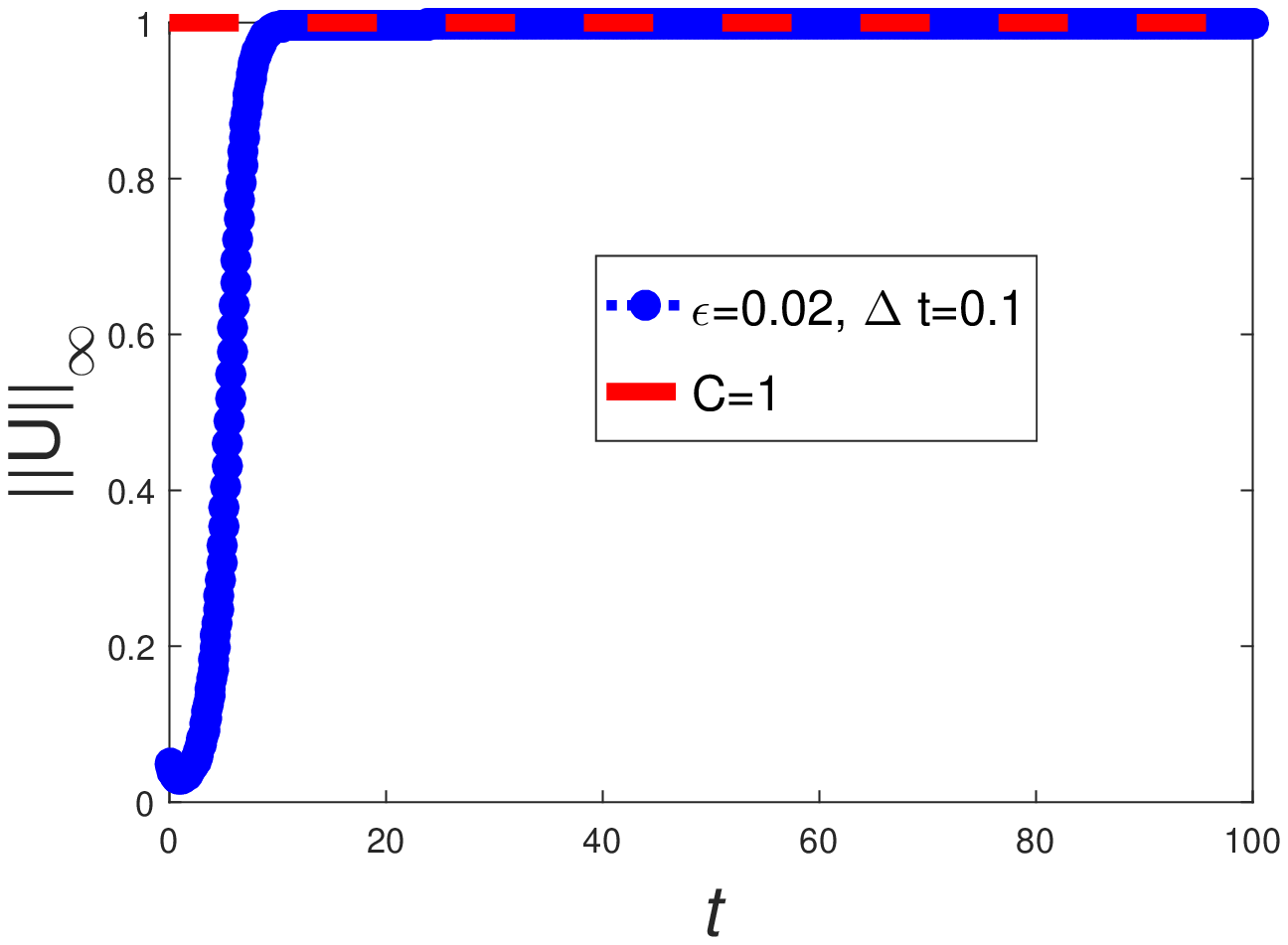}}
{\includegraphics[width=0.32\textwidth]{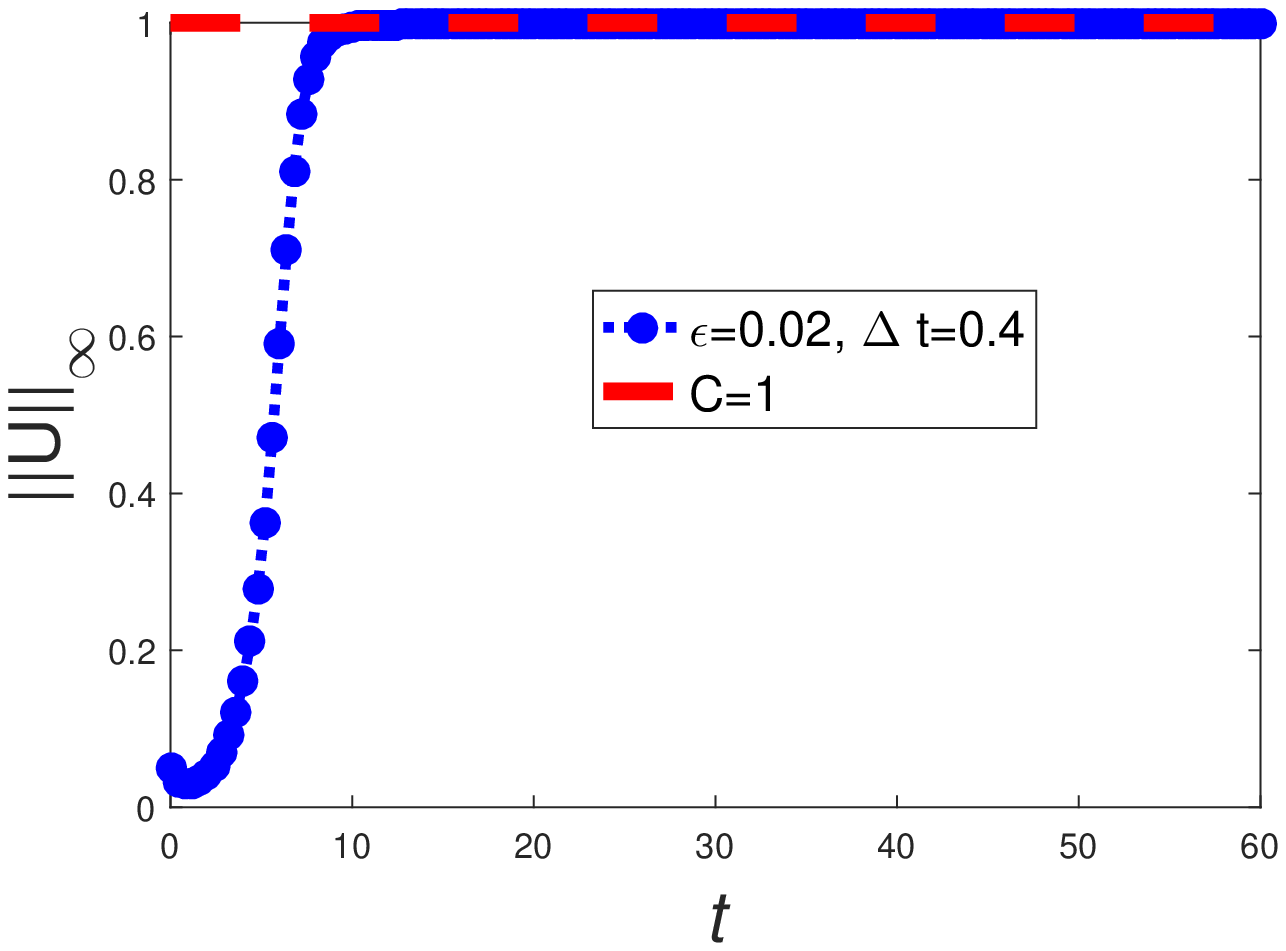}}
{\includegraphics[width=0.32\textwidth]{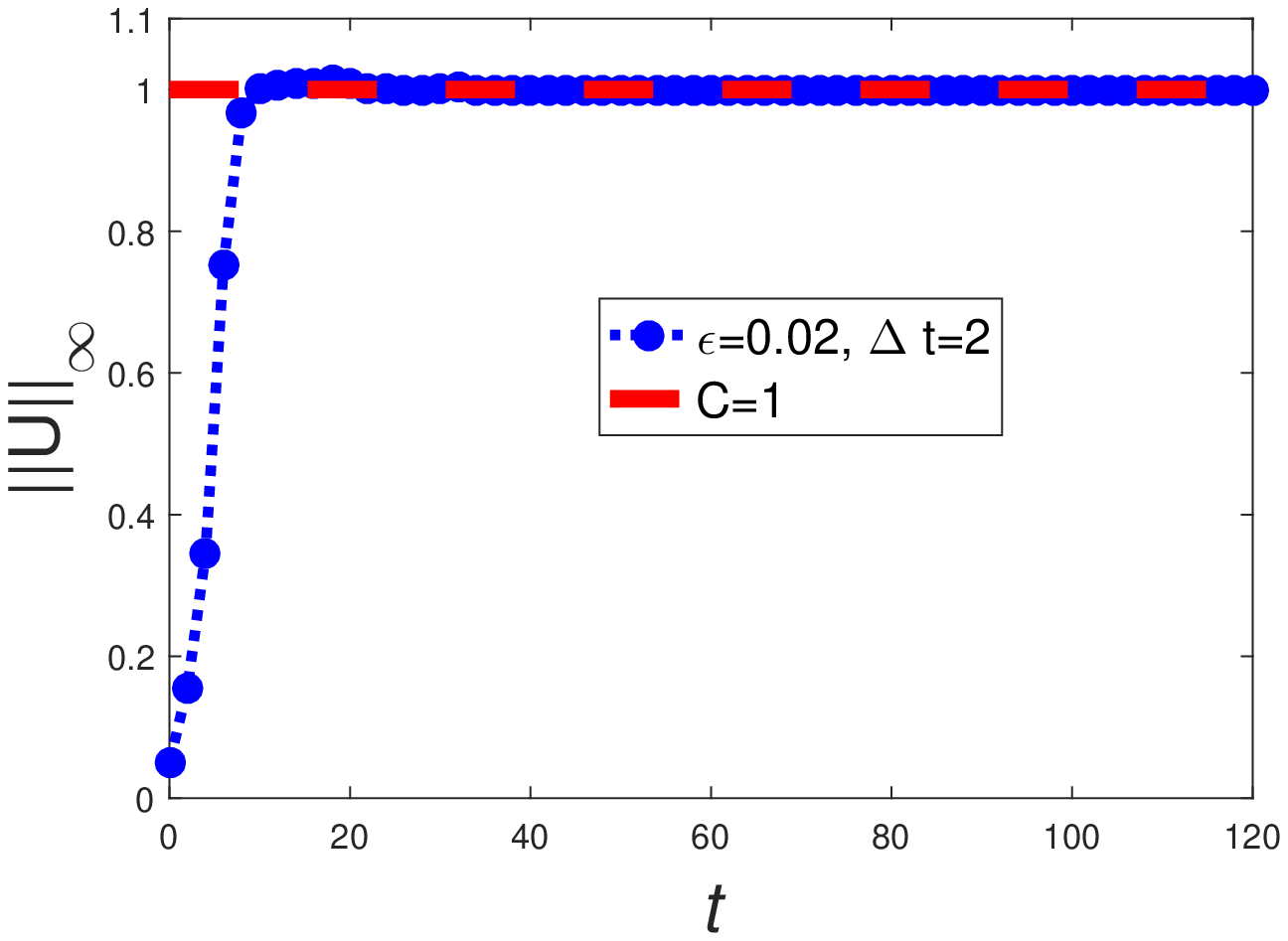}}
\caption{Numerical results with $\alpha=1.7, \varepsilon=0.02$ and $h=0.01$: the maximum values of solution with different $\Delta t$.}
\label{Fig5}
\end{figure}

Finally, we also investigate the effects of fractional diffusion on phase separation and coarsening process. We set $h=0.01, \varepsilon=0.02, \Delta t=0.5$ and $\alpha=1.2, 1.5, 1.8$. Starting from random  initial values, the snapshots of the contours for the numerical solutions at $t=5,20,40,80$ on the plane $z=0.5$ are shown in Figure~\ref{Fig6}.  Again, we see that reducing the fractional order yields to a thinner interfaces and it becomes slower for the phase coarsen process when the fractional order becomes smaller.

\begin{figure}[!tbp]
\centering
\includegraphics[scale=1.0]{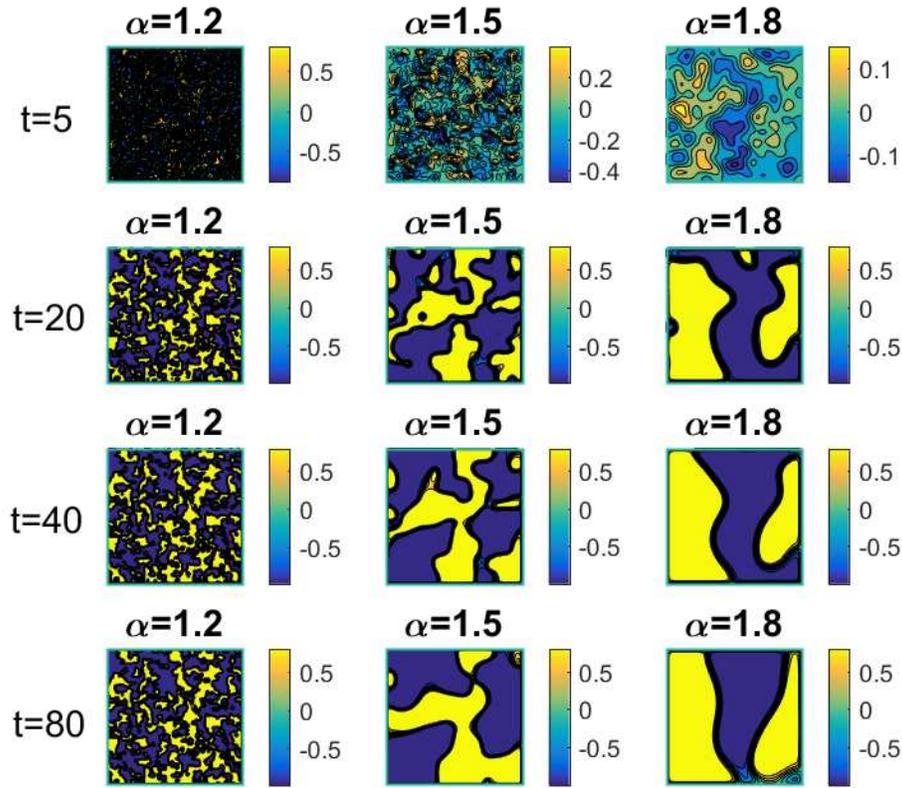}
\caption{Numerical dynamics  (contour plots on the plane $z=0.5$) with different fractional derivatives:  $\alpha=1.2, 1.5, 1.8$, where  $h=0.01, \varepsilon=0.02, \Delta t=0.5$.}
\label{Fig6}
\end{figure}

\section{Conclusions}\label{sec6}

In this paper, we developed a fourth-order maximum principle preserving operator splitting scheme for the space fractional Allen-Cahn equation. The second-order splitting method for the fractional  Allen-Cahn  equation splits the numerical procedure into three steps. The first and third steps involves  an ordinary differential equation that can be solved analytically. The intermediate step involves a linear multidimensional space fractional diffusion equation, which is solved by the ADI method  and fourth-order finite difference method.  A simple analysis for first and third steps together with a Fourier analysis for second ADI step show that the proposed operator splitting method is unconditionally stable for smooth solutions.  Additionally, under certain reasonable  time step constraint,   the discrete maximum principle is obtained. Finally, Richardson extrapolation is exploited to increase the temporal accuracy to fourth order. Numerical tests for both 2D and 3D space fractional Allen-Cahn  equations are carried out, for fabricated smooth solutions,  results show that the method is unconditionally stable and fourth-order accurate in both time and space variables.  More importantly, the discrete  maximum principle are numerically well verified.

The proposed linear scheme in this paper is fourth-order accurate, maximum principle preserving and unconditionally stable.  However, discrete energy decay law is generally not satisfied.  It will be very interesting to develop high-order linearized schemes with both discrete maximum principle and energy decreasing property. This will be our future objective.

\end{document}